\newcommand{\erre}{\mathbb{R}}
\newcommand{\erren}{\mathbb{R}^n}
\newcommand{\ricc}{\operatorname{Ric}}
\newcommand{\diver}{\operatorname{div}}
\newcommand{\Hess}{\operatorname{Hess}}
\newcommand{\sff}{\mathrm{II}}
\newcommand{\ra}{\rightarrow}
\newcommand{\set}[1]{{\left\{#1\right\}}}               
\newcommand{\pa}[1]{{\left(#1\right)}}                  
\newcommand{\abs}[1]{{\left|#1\right|}}                 
\newcommand{\pair}[1]{g\left(#1\right)}      
\newcommand{\metric}{g}                          
\renewcommand{\hat}[1]{\widehat{#1}}
\renewcommand{\tilde}[1]{\widetilde{#1}}
\newcommand{\tx}[1]{\mbox{\;{#1}\;}}
\newcommand{\p}{\partial}
\newtheorem{theorem}{\textbf{Theorem}}[section]
\newtheorem{lemma}[theorem]{\textbf{Lemma}}
\newtheorem{proposition}[theorem]{\textbf{Proposition}}
\theoremstyle{remark}
\newtheorem{rem}[theorem]{\textbf{Remark}}
\newtheorem{exe}[theorem]{\textbf{Example}}
\numberwithin{equation}{section}
\title[]
{On the stability of solutions of semilinear elliptic equations with Robin boundary conditions on Riemannian manifolds}
\date{\today} \linespread{1.2}
\keywords{}
\subjclass[2010]{35B35, 35B36, 35J61, 35K58, 58J05, 58J32, 58J35}
\begin{document}

\maketitle
\begin{center}
\textsc{\textmd{C. Bandle\footnote{University of Basel, Switzerland. Email: c.bandle@gmx.ch.}, P. Mastrolia\footnote{Universit\`{a} degli Studi di
Milano, Italy. Email: paolo.mastrolia@unimi.it.}, D. D. Monticelli\footnote{Universit\`{a} degli Studi di Milano, Italy. Email:
dario.monticelli@unimi.it.} and F. Punzo\footnote{Universit\`{a} degli Studi di Milano, Italy. Email: fabio.punzo@unimi.it. \\ P. Mastrolia, D.D.
Monticelli and F. Punzo are supported by GNAMPA project ``Analisi Globale, PDE's e Strutture Solitoniche'' and are members of the Gruppo Nazionale
per l'Analisi Matematica, la Probabilit\`{a} e le loro Applicazioni (GNAMPA) of the Istituto Nazionale di Alta Matematica (INdAM). }}}
\end{center}

\begin{abstract}
We investigate existence and nonexistence of stationary stable nonconstant solutions, i.e. patterns, of semilinear parabolic problems in bounded
domains of Riemannian manifolds satisfying Robin boundary conditions. These problems arise in several models in applications, in particular in
Mathematical Biology. We point out the role both of the nonlinearity and of  geometric objects such as the Ricci curvature of the manifold,
the second fundamental form of the boundary of the domain and its mean curvature. Special attention is devoted to surfaces of
revolution and to spherically symmetric manifolds, where we prove refined results.
\end{abstract}


\section{Introduction}

%
%

In this paper we study stability and instability of solutions of
\begin{align}\label{original}
\Delta u +f(u)=0 \tx{in} \Omega, \quad \frac{\p u}{\p\nu}+\alpha u=0 \tx{on} \p\Omega.
\end{align}
Here $\Omega$ is a smooth domain in a Riemannian manifold $(M, g)$, $\Delta$ is the Laplace-Beltrami operator on $M$, $\nu$ is the outer normal to
$\p\Omega$ with respect to $M$ and $\alpha \in \erre$ is an arbitrary fixed number. A solution of problem \eqref{original} may be regarded
as a stationary solution of the parabolic problem
\begin{equation}\label{e1f}
\left\{
\begin{array}{ll}
\displaystyle {\frac{\partial u}{\partial t}= \Delta u +f(u) }
 & \textrm{in} \;\;  \Omega\times (0,T)
\\& \\
\displaystyle{\frac{\partial u}{\partial \nu}} + \alpha u =0  &
\textrm{in} \;\; \partial \Omega\times (0,T)
\\& \\
u=u_0 & \textrm{in} \;\;  \Omega\times\{0\}\,.
\end{array}
\right.
\end{equation}

Stable solutions are used in mathematical models of pattern formation.  They are often called {\it patterns} and have attracted much attention in the
literature (see e.g. \cite{BPT}, \cite{BrHe90}, \cite{CH}, \cite{HaVe}, \cite{J}, \cite{KW}, \cite{M}, \cite{NG}, \cite{Pu2}, \cite{RW}, \cite{Yan}).
Applications of problem \eqref{original} to mathematical biology are found e.g. in \cite{DiMaOt}, \cite{MaMy}, \cite{WeWi}. A biochemical process on
surfaces of revolutions is described and analyzed in \cite{RW}. In most papers it is assumed that the boundary is impermeable (that is $\alpha=0$).
However it is reasonable to consider also the case of a flux that is proportional to the solution (see e.g. \cite{DiMaOt}, \cite{MaMy}, \cite{WeWi}).
This motivates our choice of Robin boundary conditions with $\alpha\in \erre.$

\smallskip

We recall (see e.g. \cite{He}) that a solution of problem \eqref{original} is {\sl asymptotically stable} if the smallest eigenvalue $\lambda_1$ of
the linearized problem
\begin{equation}\label{e6f}
\left\{
\begin{array}{ll}
 \, \Delta \phi\, +\, f'(u)\phi + \lambda \phi =0
 &\tx{in}\Omega
\\& \\
\textrm{ }\displaystyle{\frac{\partial \phi}{\partial \nu}} +\alpha \phi
 \,=\, 0& \tx{in}\partial
\Omega  \,
\end{array}
\right.
\end{equation}
is positive, it is {\sl unstable} if $\lambda_1$ is negative and {\sl neutrally stable} if $\lambda_1=0$. Asymptotically stable solutions $U$ of
problem \eqref{original} have the property that they attract for large time all solutions of \eqref{e1f} which initially are sufficiently close to
$U$ (see e.g. \cite{He}) while the latter are repelled from the unstable ones. If $\lambda_1=0$ both situations may occur.

The sign of $\lambda_1$ is also crucial in the calculus of variations. Indeed, let $\{x_i\}_1^m$ be a system of local coordinates and let $g_{ij}$ be
the corresponding  metric tensor of $M$. Its inverse will be denoted by $g^{ij}$. Furthermore we have $\pair{\nabla u,\nabla \phi}= g^{ij}
u_{x_i}\phi_{x_j}$ and $|\nabla u|^2=\pair{\nabla u,\nabla u}$. Here and in the sequel we shall use the Einstein summation convention. The solutions
of \eqref{original} are related to the following {\sl energy} functional $$ \mathcal{E}(u,\Omega) = \int_\Omega |\nabla u|^2 \:d\mu +\alpha \oint_{\p
\Omega} u^2 \:dS-2\int_\Omega F(u)\:d\mu,
$$
where $d\mu$ is the Riemannian volume element, $dS$ is the surface element of $\p \Omega$, and $F'(u)=f(u)$. If $u$ is a solution of \eqref{original}
then the Fr\'echet derivative of $\mathcal E$ vanishes at $u$, more precisely
$$
{\frac 1 2}\dot{\mathcal{E}}(u,\Omega)=\int_\Omega \pair{\nabla u,\nabla \phi}\:d\mu +\alpha \oint_{\p \Omega} u\phi\: dS -\int_\Omega f(u)\phi\:d\mu
= 0 \quad \tx{for all} \; \phi \in W^{1.2}(\Omega).
$$
The second derivative of $\mathcal{E}$ at $u$ is
$$
{\frac 1 2}\ddot{\mathcal{E}}(u,\Omega)= \int_\Omega |\nabla \phi|^2\:d\mu +\alpha \oint_{\p \Omega} \phi^2\: dS -\int_\Omega f'(u)\phi^2\:d\mu.
$$
If $\lambda_1$ is positive, then $\mathcal{E}(u,\Omega)$  is a local minimum whereas if $\lambda_1$ is negative $u$ is a  saddle point.
\medskip

The study of stability of the solutions of \eqref{original} has a long history. First results were obtained by Hudjaev \cite{Hu63} and Keller and
Cohen \cite{KeCo67} for problems in $\erren$, while the stability and the uniqueness of solutions to \eqref{original} in the case of weighted concave
nonlinearities and positive solutions were studied in \cite{BrHe90}.
\smallskip

Casten and Holland  \cite{CH}, and also Matano \cite{M},  observed that for problems with Neumann boundary conditions $(\alpha=0)$ in a convex domain
in $\erren$, all nonconstant solutions are unstable. This result was generalized to problems on manifolds by Jimbo \cite{J} and by Bandle, Punzo and
Tesei \cite{BPT} (see also \cite{Pu2}). Jimbo proved that all non-stationary solutions are unstable if the Ricci curvature of $M$ and the second
fundamental form of the boundary are positive. The aim of this paper is to study the stability of solutions to problems with Robin boundary
conditions in bounded domains, both in $\erre^m$ and, more in general, in Riemannian manifolds. In this case the conditions on the boundary, which
imply instability, depend also on the nonlinearity $f$.

To give an idea of our results, let $\Omega$ be a domain in $\erre^m$, let $\kappa_i$, $i=1,..m-1$, be the principal curvatures of $\partial \Omega$
and denote by  $H=-(m-1)^{-1}\sum_1^{m-1}\kappa_i$ the mean curvature of $\partial \Omega$. By Theorem \ref{thm_Dario1}, we have that if
$\alpha+(m-1)H +\frac{f(u)}{\alpha u} <0$ on $\partial\Omega$ and if an additional assumption involving the second fundamental form of $\partial
\Omega$ and $\alpha$ is satisfied,  then every non trivial solution is unstable. Note that this condition in case $m=2$ reduces to $\alpha - H
>0$. For non positive $\alpha$ in particular it implies that $\partial \Omega$ is convex.

As a counterpart we can derive from this type of considerations estimates for stable solutions. It should be pointed out that no assumption on the
sign of $\alpha$ or on the solution is made. In the case of Riemannian manifolds a similar result holds under the additional assumption that the
Ricci curvature is nonnegative. However for surfaces of revolution or for problems on spherically symmetric manifolds we can allow the Ricci
curvature to be negative provided it satisfies a suitable bound from below, see Sections \ref{sectionS} and \ref{FE}. Furthermore we construct by
means of arguments developed in \cite{BPT} and in \cite{Yan} a counterexample which shows that this bound is sharp .

The discussion of nonexistence of stable solutions is based on the variational characterization of $\lambda_1$, on the well-known
Bochner-Weitzenb\"ock formula and on a, to our knowledge, new result on the decomposition of the normal derivative of $|\nabla w(x)|^2$ on $\partial
\Omega$ where $w$ satisfies $\partial_\nu w = -\alpha w$ on $\partial \Omega$, for some $\alpha\in \erre$, see Theorem \ref{LemmaBoundary}.
 Similar formulas are known in the literature for special cases and have been extensively used by L.E. Payne and his
collaborators to obtain estimates for the solutions of boundary value problems see e.g. \cite{Sp81}.

The particular case of Neumann boundary conditions $\alpha=0$ was studied in \cite{M} for $\Omega\subset \erre^m$ and in \cite{BPT} for
$\Omega\subset M$\,. The case of a general $\alpha\in \erre$ involves a deeper rather technical analysis based on the method of moving frames, see
Section \ref{RG}.
\medskip

The paper is organized as follows. At first in Section \ref{plane}, we deal with the case of domains of $\erre^2$
in order to illustrate the flavor of the technique. Section \ref{RG} is an introduction to the geometrical notions and tools needed in the sequel.
Section \ref{SecDario} is concerned with the general result on instability on Riemannian manifolds, while Section \ref{sectionS} is devoted to the
case of surfaces of revolution where sharper results are obtained. The last section contains some applications to specific manifolds.


\section{Domains in the plane}\label{plane}
In order to get a better insight and because the arguments are elementary we treat first the case where $\Omega$ is a domain in the plane.
\medskip

Suppose that the boundary $\p\Omega$ is represented by the curve $s\mapsto x(s):=(x_1(s),x_2(s))$, with $s\in [0,
l]$, where $s$ is the arc-length. We assume that $x(s)$ is positively oriented, and that the curve is sufficiently smooth, so that the differential
equation \eqref{original} holds up to the boundary. The outer normal to $\partial \Omega$ will be denoted by $\nu$. In a neighborhood of the
boundary a point $x\in \Omega$ is given by
\begin{equation}\label{e300}
x(\rho,s)= x(s)-\rho\nu(s).
\end{equation}
Here $(\rho, s)$ are called {\sl normal coordinates}. By the Frenet formula $\dot\nu=\kappa \dot x$ where $\kappa$ is the curvature of $\partial
\Omega$. The metric $g$ can be written as  $g = (1-\rho\kappa)^2ds^2 + d\rho^2$; we thus have
\begin{align*}
g_{ij}=\left(\begin{array}{cc}(1-\rho\kappa)^2 & 0 \\
0 & 1\end{array}\right), \quad g^{ij} = \left(\begin{array}{cc}(1-\rho\kappa)^{-2} & 0 \\
0 & 1\end{array}\right),\quad \mathfrak g^2=\rm{det}(g_{ij}) = (1-\rho\kappa)^2,
\end{align*}
where $g_{ij}$ are the components of $g$ and $g^{ij}$ are the components of its inverse. Moreover, the volume element $dx$ is $dx=(1-\rho
\kappa)dsd\rho$, while for a sufficiently regular function $u$ the gradient and the Laplacian (see also the next section) are, respectively, given by
\begin{equation}\label{diffop}
\begin{aligned}
 |\nabla u|^2&= \frac{u_s^2}{(1-\rho \kappa)^2} + u_\rho^2,\\
  \Delta u &= \frac{1}{1-\rho\kappa}\left(\frac{u_s}{1-\rho\kappa}\right)_s + \frac{1}{1-\rho\kappa} ((1-\rho\kappa)u_\rho)_\rho.
\end{aligned}
\end{equation}
Consider now the original problem \eqref{original} and the corresponding eigenvalue problem \eqref{e6f}. The smallest eigenvalue is characterized by
the Rayleigh principle
\begin{align}\label{eigenvalueR}
\lambda_1 =\min_{W^{1,2}(\Omega)} \frac{\int_{\Omega} |\nabla v|^2\:dx -\int_{\Omega} f'(u)v^2\:dx +\alpha \oint_{\p \Omega} v^2\:ds}{\int_{\Omega}
v^2\:dx}.
\end{align}
Following Casten and Holland \cite{CH}, we choose $v=u_{x_i}$ as a test function in \eqref{eigenvalueR}. Then by the Gauss theorem
\begin{align*}
\lambda_1 \int_{\Omega}u_{x_i}^2\:dx &\leq \int_{\Omega} |\nabla u_{x_i}|^2\:dx +\alpha \oint_{\p\Omega} u_{x_i}^2\:ds -\int_{\Omega} f'(u)u_{x_i}^2\:dx\\
&=-\int_\Omega u_{x_i}\Delta u_{x_i}\:dx + \frac{1}{2}\oint_{\p \Omega} \p_\nu u_{x_i}^2\:ds +\alpha \oint_{\p\Omega}u_{x_i}^2\:ds-\int_{\Omega} f'(u)u_{x_i}^2\:dx.
\end{align*}
Replacing $\Delta u_{x_i}$ by $-f'(u)u_{x_i}$ and summing over $i$ we get
\begin{align}\label{eigenv.est}
\lambda_1 \int_\Omega |\nabla u|^2\:dx \leq \frac{1}{2}\oint_{\p \Omega}\p_\nu |\nabla u|^2\:ds +\alpha \oint_{\p \Omega} |\nabla u |^2\:ds.
\end{align}
Next we compute the first integral on the right-hand side of \eqref{eigenv.est}. From \eqref{diffop} we conclude that on $\p \Omega$ (that is
$\rho=0$ in \eqref{e300})
$$
\frac{1}{2}\p_\nu|\nabla u|^2= -\frac{1}{2}\p_\rho|\nabla u|^2=-u_su_{s\rho}-\kappa u_s^2 -u_\rho u_{\rho \rho}.
$$
Keeping in mind the boundary condition $u_\rho=\alpha u$ and from \eqref{diffop} and \eqref{original} the relation
$$
u_{\rho\rho} -\kappa u_\rho +u_{ss} +f(u)=0 \tx{on} \p \Omega,
$$
we obtain
\begin{align} \label{normal}
\frac{1}{2}\p_\nu|\nabla u|^2=-(\alpha + \kappa)u_s^2-\kappa \alpha^2u^2+\alpha uu_{ss} +\alpha uf(u).
\end{align}
Since
$$
\alpha \oint_{\p \Omega} uu_{ss}\:dS= -\alpha \oint_{\p \Omega} u_s^2\:ds
$$
we finally get
\begin{align}\label{inequality}
\lambda_1\int_\Omega |\nabla u|^2\:dx \leq \oint_{\p \Omega} \alpha^2u^2\left[\alpha -\kappa + \frac{f(u)}{\alpha u}\right]-(\alpha +\kappa) u_s^2\:ds.
\end{align}
\begin{theorem}\label{thm:1}
Assume $\alpha\neq 0$. If on  $\partial \Omega$
the conditions
\begin{align*}
& {\bf (C0)}\quad\quad  \oint_{\p \Omega} \alpha^2u^2\left[\alpha -\kappa + \frac{f(u)}{\alpha u}\right]<0\\
&{\bf (C1)} \quad \quad \quad \alpha + \kappa_{\min} \geq 0.
\end{align*}
are satisfied then $\lambda_1<0$ and $u$ is unstable.
\end{theorem}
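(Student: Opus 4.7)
The proof is essentially a direct reading of inequality \eqref{inequality}, which was already derived right above the theorem without any sign assumption on $\alpha$, $\kappa$, or $u$. The real work (computing $\partial_\nu|\nabla u|^2$ in normal coordinates, applying the boundary condition $u_\rho=\alpha u$, using the equation on $\partial\Omega$, and integrating the term $\alpha uu_{ss}$ by parts around the closed curve) is already done. The plan is therefore to carefully read off the sign of the right hand side of \eqref{inequality} under \textbf{(C0)} and \textbf{(C1)}, and then rule out the degenerate case in which the left hand side vanishes.

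First, I would take $v=u_{x_i}$ in the Rayleigh characterization \eqref{eigenvalueR}, sum over $i$, and record the resulting bound in the form \eqref{inequality}:
\begin{equation*}
\lambda_1\int_\Omega |\nabla u|^2\,dx \;\leq\; \oint_{\partial\Omega}\alpha^2u^2\!\left[\alpha-\kappa+\frac{f(u)}{\alpha u}\right]ds \;-\;\oint_{\partial\Omega}(\alpha+\kappa)u_s^2\,ds .
\end{equation*}
This is the master inequality; everything else is sign-checking.

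Next, I would bound the right hand side. Hypothesis \textbf{(C0)} says exactly that the first boundary integral is strictly negative. Hypothesis \textbf{(C1)}, namely $\alpha+\kappa_{\min}\ge 0$, yields $\alpha+\kappa(s)\ge 0$ at every point of $\partial\Omega$, so the second integrand $(\alpha+\kappa)u_s^2$ is pointwise nonnegative and thus $-\oint_{\partial\Omega}(\alpha+\kappa)u_s^2\,ds\le 0$. Combining the two, the right hand side is strictly negative.

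Finally, I would conclude. The left hand side equals $\lambda_1\int_\Omega|\nabla u|^2\,dx$ with $\int_\Omega|\nabla u|^2\,dx\ge 0$. If $\int_\Omega|\nabla u|^2\,dx=0$, i.e.\ $u$ is a constant, then the left hand side is $0$, contradicting the strict negativity of the right hand side; hence $u$ is nonconstant and $\int_\Omega|\nabla u|^2\,dx>0$. Dividing both sides by this positive quantity then forces $\lambda_1<0$, so $u$ is unstable.

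The theorem itself has no real obstacle — the delicate step was the derivation of \eqref{inequality} in the paragraphs preceding the statement (specifically, the identity \eqref{normal} for $\tfrac{1}{2}\partial_\nu|\nabla u|^2$ on $\partial\Omega$ obtained via normal coordinates and the Frenet formula, and then eliminating $u_{ss}$ by integration by parts on the closed curve $\partial\Omega$). Once that inequality is available, the theorem is immediate; the only subtle point worth flagging is that \textbf{(C0)} automatically excludes the constant solution $u\equiv 0$ (for which the left hand side of \textbf{(C0)} would vanish), which is what allows the final division by $\int_\Omega|\nabla u|^2\,dx$.
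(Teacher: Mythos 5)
Your proposal is correct and is exactly the argument the paper intends: Theorem \ref{thm:1} is stated immediately after inequality \eqref{inequality}, and its proof is precisely the sign-reading of the right-hand side under \textbf{(C0)} and \textbf{(C1)} that you carry out. Your extra observation that \textbf{(C0)} rules out the constant (necessarily zero) solution, so that $\int_\Omega|\nabla u|^2\,dx>0$ and the division is legitimate, is a worthwhile point the paper leaves implicit.
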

Observe that
$$
{\bf (C2)} \quad \quad \quad \alpha-\kappa_{\min} + \frac{f(u)}{\alpha u} <0
$$
implies ${\bf(C0)}$.
\begin{rem}   If $\p\Omega$ is a level line of $u$, then $u_s=0$ on $\p\Omega$ and hence
 Theorem \ref{thm:1} remains true only under the condition $({\bf C0})$.
\medskip

\noindent If $\partial \Omega$ consists of several closed curves $\Gamma_i$, $i=1,\ldots,k$, then condition $({\bf C2})$ can be replaced by
$\alpha-\min_{\Gamma_i}\kappa + \max_{\Gamma_i}\frac{f(u)}{\alpha u} <0$ for $i=1,\ldots,k$.
\end{rem}

\begin{exe} Let $\Omega$ be an annulus with the radii $r_0<R$ and $u=u(r)$ be radial. Then $u$ is unstable if
 $$
 \left[r_0\alpha + 1 +\frac{r_0 f(u(r_0))}{\alpha  u(r_0)}\right]u^2(r_0) +\left[R\alpha -1 +\frac{Rf(u(R))}{\alpha  u(R)}\right]u^2(R) <0.
 $$

\end{exe}
Note that inequality \eqref{inequality} contains also information
 on stable solutions. In fact for a stable solution $\lambda_1$ is positive and thus
$$
0\leq \oint_{\p \Omega} \left\{\alpha^2u^2\left[\alpha -\kappa + \frac{f(u)}{\alpha u}\right]-(\alpha +\kappa) u_s^2\right\}\:ds.
$$
If $\bf(C1)$ holds then the expression $\alpha -\kappa + \frac{f(u)}{\alpha u}$ must be  positive somewhere. For instance if $u$ is the first
eigenfunction of $\Delta u+\lambda u=0$ in $\Omega$, $\frac{\partial u}{\partial \nu} +\alpha u=0$ on $\partial \Omega$ and $\alpha >0$ then
$$
\lambda_1\geq \alpha \kappa_{\min} -\alpha^2.
$$

\section{Some useful tools from Riemannian geometry}\label{RG}
 In this section we collect some notions and results from Riemannian Geometry following \cite{YamabeBook} and
\cite{AMR}. Moreover we prove a decomposition theorem (see Theorem \ref{LemmaBoundary} below) for the normal derivative of the squared norm of the
gradient of an arbitrary smooth function satisfying Robin boundary conditions.
\subsection{Basics on the method of moving frames}

Let $\pa{M, \metric}$ be a Riemannian manifold of dimension $m$ with metric $\metric$. Let $p\in M$ and let $(U, \varphi)$ be a \emph{local chart}
such that $p\in U$. Denote by $x^1,\ldots, x^m$, $\,m=\operatorname{dim}\, M$  the coordinate functions on $U$. Then, at any $q\in U$ we have
\begin{equation}
\label{GP1.1}
\metric=g_{ij}\,dx^{i}\otimes dx^{j}
\end{equation}
where $dx^{i}$ denotes the differential of the function $x^{i}$ and
$g_{ij}$ are the (local) components of the metric defined by
$g_{ij}=\pair{\frac{\partial}{\partial x^{i}},
\frac{\partial}{\partial x^{j}}}$. In  equation \eqref{GP1.1} and
throughout this section we adopt the Einstein summation convention
over repeated indices. Applying in $q$ the Gram-Schmidt
orthonormalization process  we can find linear combinations of the
1-forms $dx^{i}$ which we will call $\theta^{i}$ for $i=1,\ldots,
m$. Then  \eqref{GP1.1} takes the form
\begin{equation}
\label{GP1.2}
\metric=\delta_{ij}\theta^{i}\otimes\theta^{j},
\end{equation}
where $\delta_{ij}$ is the Kronecker symbol.  Since, as $q$ varies
in $U$, the previous process gives rise to coefficients that are
$C^\infty$ functions of $q$, the set of 1-forms $\set{\theta^{i}}$
defines an orthonormal system on $U$ for the metric $\metric$, i.e.  a (local) \emph{orthonormal coframe}. It is usual to write
\[
\metric=\sum_{i=1}^m(\theta^{i})^2,
\]
instead of \eqref{GP1.2}. We also define the (local) \emph{dual
orthonormal frame}  $\{e_{i}\}$, for $i=1,\ldots, m$, as the set of
vector fields on $U$ satisfying
\begin{equation}
\label{GP1.3} \theta^{j}\pa{e_i}=\delta^{j}_{i},
\end{equation}
where $\delta^{j}_{i}$ is the Kronecker symbol. We have the following

\begin{proposition}
\label{GP1.p1} Let $\{\theta^{i}\}$ be a local orthonormal coframe
defined on the open set $U\subset M$; then on $U$ there exist unique
1-forms $\set{\theta^{i}_{j}}$, for $i,j=1\ldots, m$, such that
\begin{equation}
\label{GP1.4bis} d\theta^{i}=-\theta^{i}_{j}\wedge\theta^{j}
\end{equation}
and
\begin{equation}
\label{GP1.5}
\theta^{i}_j+\theta^{j}_i=0
\end{equation}
\end{proposition}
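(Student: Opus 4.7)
The plan is to dispose of uniqueness first, and then to construct the forms $\theta^i_j$ explicitly by an algebraic recipe applied to the ``structure coefficients'' of $d\theta^i$ in the coframe $\{\theta^i\}$.

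For uniqueness, I would take two families $\{\theta^i_j\}$ and $\{\tilde\theta^i_j\}$ satisfying \eqref{GP1.4bis} and \eqref{GP1.5} and set $\omega^i_j := \theta^i_j - \tilde\theta^i_j$. Then $\omega^i_j \wedge \theta^j = 0$ and $\omega^i_j + \omega^j_i = 0$. Expanding $\omega^i_j = A^i_{jk}\theta^k$ in the coframe, the first relation (via the linear independence of $\{\theta^j \wedge \theta^k\}_{j<k}$, i.e.\ Cartan's lemma) forces $A^i_{jk}$ to be symmetric in the lower pair $(j,k)$, while the second forces antisymmetry in $(i,j)$. A short cycle of index permutations, combining symmetry in $(j,k)$ with antisymmetry in $(i,j)$, then yields $A^i_{jk} = -A^i_{jk}$ and hence $\omega^i_j \equiv 0$.

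For existence, I would expand $d\theta^i = \tfrac{1}{2} c^i_{jk}\,\theta^j \wedge \theta^k$ with $c^i_{jk} = -c^i_{kj}$ (this is just the statement that the $\theta^j\wedge\theta^k$ with $j<k$ form a local basis of $2$-forms). Motivated by the Koszul formula, I would set $\theta^i_j := \Gamma^i_{jk}\theta^k$ with $\Gamma^i_{jk} := \tfrac{1}{2}\bigl(c^i_{jk} + c^j_{ki} - c^k_{ij}\bigr)$. The antisymmetry of $c^\bullet_{\cdot\,\cdot}$ in its two lower indices makes $\Gamma^i_{jk} + \Gamma^j_{ik} = 0$ automatic, so \eqref{GP1.5} holds; and the alternating part of $\Gamma^i_{jk}$ in $(j,k)$ is $\Gamma^i_{jk} - \Gamma^i_{kj} = c^i_{jk}$, which is exactly what is needed to recover $-\theta^i_j \wedge \theta^j = d\theta^i$, that is, \eqref{GP1.4bis}.

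The main obstacle is really just the index bookkeeping in the existence step: one must carefully exploit all six permutations of $(i,j,k)$ in the ansatz for $\Gamma^i_{jk}$ to see that the two required conditions fall out simultaneously. Beyond that, no analysis or genuine geometry is involved; everything follows from the orthonormality of $\{\theta^i\}$ and from standard multilinear algebra, so the statement is essentially a translation of the existence and uniqueness of the Levi-Civita connection into the coframe language.
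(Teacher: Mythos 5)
Your proposal is correct: the uniqueness argument via Cartan's lemma (symmetry of $A^i_{jk}$ in $(j,k)$ from $\omega^i_j\wedge\theta^j=0$, antisymmetry in $(i,j)$ from the skew condition, and the six-fold index cycle forcing $A^i_{jk}=-A^i_{jk}$) and the existence construction $\Gamma^i_{jk}=\tfrac12\bigl(c^i_{jk}+c^j_{ki}-c^k_{ij}\bigr)$ both check out, and the two required identities $\Gamma^i_{jk}-\Gamma^i_{kj}=c^i_{jk}$ and $\Gamma^i_{jk}+\Gamma^j_{ik}=0$ do follow from the antisymmetry of $c^i_{jk}$ in its lower indices. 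The paper states this proposition without proof, deferring to standard references, and your argument is precisely the standard one those references give.
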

The forms $\theta^{i}_j$ are called the \emph{Levi-Civita connections forms} associated to the orthonormal coframe $\{\theta^{i}\}$, while equation
\eqref{GP1.4bis} is called the \emph{first structure equation}.


The \emph{curvature forms} $\{\Theta^{i}_{j}\}$ are associated to the orthonormal coframe $\{\theta^{i}\}$ through the \emph{second structure
equation}
\[
\label{GP1.13}
d\theta^{i}_j=-\theta^{i}_k\wedge\theta^{k}_{j}+\Theta^{i}_j.
\]
Because of \eqref{GP1.5} it follows immediately that
\begin{equation}
\label{GP1.14}
\Theta^{i}_j+\Theta^{j}_i=0.
\end{equation}
Using the basis $\{\theta^{i}\wedge\theta^{j}\}$, for $1\leq i<j\leq
m$, of the space of skew-symmetric $2$-forms $\Lambda^2(U)$ on the
open set $U$,  we may write
\begin{equation}\label{GP1.15}
\Theta^{i}_{j}=\frac{1}{2}R^{i}_{jkt}\theta^{k}\wedge\theta^{t}
\end{equation}
for some coefficients $R^{i}_{jkt}\in C^\infty(U)$ satisfying
\begin{equation}
\label{GP1.16} R^{i}_{jkt}+R^{i}_{jtk}=0.
\end{equation}
These are the coefficients of the $(1,3)$--version of the
\emph{Riemann curvature tensor} which we denote by $R$. More
precisely, in this local orthonormal frame we have
$$R^i_{jkt}=\Theta_j^i(e_k,e_t)=(d\theta^i_j+\theta^i_k\wedge\theta^k_j)(e_k,e_t)=g(R(e_k,e_t)e_j,e_i),$$
so that its components are
$$R=R^i_{jkt}\theta^k\otimes\theta^t\otimes\theta^j\otimes e_i.$$
Note that \eqref{GP1.14} implies
\begin{equation}
\label{GP1.17} R^{i}_{jkt}+R^{j}_{ikt}=0.
\end{equation}
The $(0,4)$--version of $R$ is defined by
$\operatorname{Riem}(X,Y,Z,W)=g(R(Z,W)Y,X)$, so that its local coefficients
$R_{ijkt}$ satisfy
$$R_{ijkt}=\operatorname{Riem}(e_i,e_j,e_k,e_t)=g(R(e_k,e_t)e_j,e_i)=R^i_{jkt}$$
and thus in the local orthonormal frame
\begin{equation}\label{Riem}
\operatorname{Riem}=R_{ijkt}\theta^i\otimes\theta^j\otimes\theta^k\otimes\theta^t.
\end{equation}

For further details, we refer  to \cite{AMR}. The \emph{Ricci tensor} is the symmetric
$(0,2)$--tensor obtained from \eqref{Riem} by tracing either with
respect to $i$ and $k$ or, equivalently, with respect to $j$ and
$t$. Thus
$$\operatorname{Ric}=R_{ij}\theta^i\otimes\theta^j$$ with
$$R_{ij} = R_{itjt} = R_{titj}.$$

Now let $u\in C^{\infty}(M)$; for the differential of $u$, $du$, we can write
\begin{equation}
\label{GP3.1} du=u_i\theta^{i}
\end{equation}
for some smooth coefficients $u_{i}$; the \emph{Hessian }of $u$  is then
defined as the $(0,2)$ tensor field $\Hess(u)=\nabla du$ of
components $u_{ij}$ given by
\begin{equation}
\label{GP3.2} u_{ij}\theta^{j}=du_i-u_{k}\theta^{k}_{i},
\end{equation}
that is
\begin{equation}
\label{GP3.3} \Hess(u)=u_{ij}\theta^{j}\otimes\theta^{i}.
\end{equation}
Here and in what follows $\nabla$ is the (unique) Levi-Civita connection associated to the metric $\metric$.
It is easy to prove that
\begin{equation}
\label{GP3.5} u_{ij}=u_{ji},
\end{equation}
so that $\Hess(u)$ is a symmetric tensor.  In global notation we
have, for all smooth vector fields $X$, $Y$ on $M$,
\begin{equation*}
  \Hess(u)\pa{X, Y}= \pa{\nabla du}\pa{X, Y}=Y(X(u))-\pa{\nabla_YX}(u)=X(Y(u))-\pa{\nabla_XY}(u);
\end{equation*}
it is also possible to show that, equivalently,
\begin{equation}\label{HessLieDer}
  \Hess(u)\pa{X, Y} = \frac12\pa{\mathcal{L}_{\nabla u}\metric}\pa{X,
  Y},
\end{equation}
where $\mathcal{L}_{\nabla u}\metric$ is the \emph{Lie derivative} of the metric $g$ in the direction of $\nabla u$.  With respect to local
coordinates $\set{x^i}$, $i=1, \ldots, m$, the Hessian  is given by
$$
\pa{\Hess(u)}_{ij}=\frac{\partial ^2u}{\partial x_i\partial x_l} -\Gamma^k_{il}\frac{\partial u}{\partial x_k}\,,
$$
where $\Gamma^k_{ij}$ are the Christoffel symbols, defined as usual as
\[
\Gamma^k_{ij}\frac{\partial}{\partial x_k} = \nabla_{\frac{\partial}{\partial x_j}}\frac{\partial}{\partial x_i}=\nabla_{\frac{\partial}{\partial
x_i}}\frac{\partial}{\partial x_j}.
\]
In the moving frame formalism the squared norm  $|\operatorname{Hess}(u)|^2$ is given by $u_{ij}u_{ij}$, while in coordinates we have
$$
|\operatorname{Hess}(u)|^2= g^{ik}g^{jl}\left(\frac{\partial ^2u}{\partial x_i\partial x_j} -\Gamma^k_{ij}\frac{\partial u}{\partial
x_k}\right)\left(\frac{\partial ^2u}{\partial x_k\partial x_l} -\Gamma^s_{kl}\frac{\partial u}{\partial x_s}\right).
$$

The
\emph{Laplacian} of $u$ is, \index{Laplacian of a function} by
definition, the trace of the Hessian, (more precisely, of the $(1,
1)$ version of the Hessian), that is,
\[
\label{GP3.4}
\Delta u =\text{Tr}(\Hess(u))=u_{ii}.
\]
The {\sl gradient} of a function $u:M\to \erre$  relative to the metric of $M$, $\nabla u$, is the vector dual to the $1$-form $du$, that is
\[
g\pa{\nabla u, X} = du(X)=X(u).
\]
for all smooth vector fields $X$ on $M$. In a local orthonormal frame we have $\nabla u = u^ie_i = u_ie_i$, so that $\abs{\nabla u}^2=u_iu_i$, while
in local coordinates
we have
$$
|\nabla u|^2= g^{ij}\frac{\partial u}{\partial x_i}\frac{\partial u}{\partial x_j}.
$$
The \emph{divergence} of a vector field $V=V^ie_i$ on $M$ is given by the trace of $\nabla V$, the covariant derivative of $V$; since $\nabla V =
\pa{dV^i+V^j\theta^i_j}\otimes e_i=V^i_j\theta^j\otimes e_i$ we have
\[
\operatorname{div} V= V^i_i,
\]
while in local coordinates
$$
\operatorname{div}V= \frac{\partial V^i}{\partial x_i}+V^k\Gamma^i_{ki}.
$$
Note that the Laplacian of the function $u$ is  the divergence of its gradient, that is
\[
\Delta u = \diver\pa{\nabla u}.
\]
In local coordinates it has the form
$$
\Delta f= \mathfrak g^{-1}\frac{\partial}{\partial x_i}\left(\mathfrak g \, g^{ij} \frac{\partial f}{\partial x_j}\right), \tx{where} \mathfrak
g=\sqrt{\rm{det}(g_{ij})}.
$$
The third derivatives of $u$   are defined by
\begin{equation}
\label{GP3.6}
u_{ijk}\theta^{k}=du_{ij}-u_{kj}\theta^{k}_{i}-u_{ik}\theta^{k}_{j}.
\end{equation}
Note that taking the covariant derivative of \eqref{GP3.5} we have
\begin{equation}
\label{GP3.7} u_{ijk}=u_{jik}.
\end{equation}
The commutation rule of the last two indices is given by
\begin{equation}
\label{GP3.8} u_{ijk}=u_{ikj}+u_tR^{t}_{ijk}=u_{ikj}+u_tR_{tijk}.
\end{equation}

We state here the classical Bochner-Weitzenb\"ock formula, see e.g. \cite{YamabeBook}, which will play a crucial role in our investigations.
\begin{lemma}\label{lemmaA}
  For all $u \in C^3(M)$ we have
      \begin{equation}\label{Dario1}
      \frac{1}{2}\Delta \abs{\nabla u}^2 = \abs{\Hess(u)}^2 + \ricc \pa{\nabla u, \nabla u} + \pair{\nabla \Delta u, \nabla u}.
      \end{equation}
\end{lemma}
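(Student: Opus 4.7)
I will prove the Bochner--Weitzenböck identity \eqref{Dario1} by a direct index calculation in a local orthonormal frame $\{e_i\}$ with dual coframe $\{\theta^i\}$, exploiting the notation and rules collected in this section. Writing $|\nabla u|^2=u_ku_k$, the starting point is to compute the covariant derivatives of this scalar step by step: differentiating once gives $(|\nabla u|^2)_i=2u_k u_{ki}$, and differentiating again and tracing in $i$ yields
\begin{equation*}
\tfrac{1}{2}\Delta|\nabla u|^2=\tfrac12(u_ku_k)_{ii}=u_{ki}u_{ki}+u_k u_{kii}.
\end{equation*}
The first summand is exactly $|\Hess(u)|^2$ by the definition of the Hilbert--Schmidt norm of the Hessian in the moving frame, namely $|\Hess(u)|^2=u_{ij}u_{ij}$.

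The core of the proof is to identify the second summand with $\ricc(\nabla u,\nabla u)+\pair{\nabla\Delta u,\nabla u}$. For this I will commute the indices of $u_{kii}$ so that the Laplacian of $u$ appears. First I use the symmetry \eqref{GP3.7} on the first two indices, $u_{kii}=u_{iki}$, and then the Ricci commutation identity \eqref{GP3.8} on the last two indices, which gives
\begin{equation*}
u_{iki}=u_{iik}+u_t R_{tiki}.
\end{equation*}
Contracting with $u_k$ and summing, the first term produces $u_k u_{iik}=u_k(\Delta u)_k=\pair{\nabla\Delta u,\nabla u}$, since by compatibility of the covariant derivative with contractions $\sum_i u_{iik}=(\Delta u)_{;k}$ and the covariant derivative of a scalar is the ordinary directional derivative.

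The curvature term then reads $u_k u_t R_{tiki}$ with a contraction in $i$. Here the only nontrivial identification is $\sum_i R_{tiki}=R_{tk}$, which I will read off the definition of the Ricci tensor recalled just after \eqref{Riem}, namely $R_{ij}=R_{itjt}$ (sum in $t$). After this substitution the curvature term becomes $u_k u_t R_{tk}=\ricc(\nabla u,\nabla u)$ by the expression of the Ricci tensor in an orthonormal frame. Collecting the three contributions yields \eqref{Dario1}.

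The only real care needed is in step two: the symmetry and commutation rules must be applied in the right order to get the Laplacian in the form $u_{iik}$ and the Ricci contraction in the form $R_{tiki}=R_{tk}$, rather than in some other contracted version of Riemann which would require an extra algebraic Bianchi step. Once the bookkeeping of indices is handled consistently with \eqref{GP3.7} and \eqref{GP3.8}, the argument is essentially two lines of computation.
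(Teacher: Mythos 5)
Your computation is correct: the identity $\tfrac12\Delta|\nabla u|^2=u_{ki}u_{ki}+u_ku_{kii}$, the reordering $u_{kii}=u_{iki}=u_{iik}+u_tR_{tiki}$ via \eqref{GP3.7} and \eqref{GP3.8}, and the contraction $\sum_iR_{tiki}=R_{tk}$ all conform exactly to the conventions fixed in this section, and the three terms assemble into \eqref{Dario1}. The paper itself offers no proof of this lemma (it is quoted as classical with a reference to \cite{YamabeBook}), so there is nothing to compare against; your moving-frame argument is the standard one and fills that gap correctly.
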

\begin{exe}\label{surfaceW}
Let $M\subset \mathbb{R}^2$ be a simply connected surface and let $\Omega$ be a $C^2$ domain on $M$. It is well-known that $M$ can be mapped
conformally into $\mathbb{R}^2$. In this case  the coordinates $(x_1,x_2)$ are isothermal and the corresponding metric tensor is $g_{ij} =p^2
\delta_{ij}$. The differential operators then become
$$
\Delta = p^{-2}\Delta_R \quad \tx{and} \quad |\nabla|^2= p^{-2}|\nabla _R|^2,
$$
where $\Delta_R$ and $\nabla_R$ are the Laplacian and the gradient in $\erre^2$.  In this case we have
\begin{align*}
p^2\frac{1}{2}\Delta |\nabla u |^2=u_{ik}u_{ik}p^{-2} +u_iu_{ikk}p^{-2} +2u_iu_{ik}(p^{-2})_k +\frac{1}{2}|\nabla_R u|^2\Delta p^{-2}\\
=u_{ik}u_{ik}p^{-2}+2u_iu_{ik}(p^{-2})_k -u_i(p^{-2})_i\Delta_R u +u_i\left(\frac{\Delta_R u}{p^2}\right)_i +\frac{1}{2}|\nabla_R u|^2\Delta_R p^{-2}.
\end{align*}
Here the subscripts denote differentiation with respect to $x_i$. Set for short  $f:= \log p$. Then $(p^{-2})_k= -\frac{2}{p^2}f_k$.
Thus in two-dimensions
\begin{align*}
&u_{ik}u_{ik}p^{-2}+2u_iu_{ik}(p^{-2})_k -u_i(p^{-2})_i\Delta_R u \\
&= p^{-2}[(u_{11}^2+2u_{12}^2+ u_{22}^2) + 2(u_1f_1-u_2f_2)(u_{22}-u_{11}) -4(u_1f_2+u_2f_1)u_{12})] \\
&= p^{-2}\left[(u_{11}-u_1f_1+u_2f_2)^2 +(u_{22}+f_1u_1-u_2f_2)^2 +2(u_{12} -u_1f_2-u_2f_1)^2\right]\\
&- 2p^{-2}(u_1^2+u_2^2)(f_1^2+f_2^2).
\end{align*}
Moreover
$$
\frac{1}{2}\Delta_R p^{-2}= -\frac{1}{p^2}\Delta_R f + \frac{2}{p^2} |\nabla_R f|^2.
$$
Note that $K= -p^{-2} \Delta_R f$ is the Gaussian curvature of $M$. Consequently
\begin{align*}
\frac{1}{2}\Delta |\nabla u |^2&= \underbrace{p^{-4} \left[(u_{11}-u_1f_1+u_2f_2)^2 +(u_{22}+f_1u_1-u_2f_2)^2 +2(u_{12} -u_1f_2-u_2f_1)^2\right]}_{|\rm{Hess}(u)|^2}\\
&+p^{-2}K|\nabla_R u|^2 +p^{-2}u_i (\Delta u)_i.
\end{align*}

\end{exe}
\subsection{Immersed submanifolds}
Let $(N, g_N)$ and $M$ be respectively a Riemannian manifold and a manifold of dimensions $n$ and $m$, with $m\leq n$. Let $f:M\rightarrow N$ be an
\emph{immersion} and let $\metric=f^{*}\metric_N$ be the metric induced on $M$ by $f$ where $f^*$ denotes the pullback. If $g_M$ is a given
Riemannian metric on $M$ and $f:M\rightarrow N$ is an immersion we  say that $f$ is an \emph{isometric immersion} if $g_M=\metric=f^{*}g_N$.

Let $\mathcal{V}\subset N$ be an open set, and let $p\in f^{-1}(\mathcal{V})$. By reducing $\mathcal{V}$ we can assume that the connected component
$\mathcal{U}$ of $f^{-1}(\mathcal{V})$ with $p$ is an embedded submanifold in the domain of a local flat chart.

We fix the following indices convention:
\[
1\leq i,j,k,\ldots\leq m, \quad m+1\leq \alpha,\beta,\gamma,\ldots\leq n, \quad 1\leq a,b,c,\ldots\leq n.
\]

By means of the Gram-Schmidt procedure we can construct an orthonormal frame $\{E_a\}$ in a neighborhood of $f(\mathcal{U})$ such that $\set{E_i}$
is a basis for $Tf(\mathcal{U})$. We call this frame a \emph{Darboux frame along $f$}, and we write $\set{e_i}$ for the basis of the tangent space at
$\mathcal{U}$ such that $f_*e_i=E_i$ (where $f_*e_i$ is the pushforward of $e_i$ by the map $f$). The dual $\set{\theta^{a}}$ of a Darboux coframe is
called a \emph{Darboux coframe along $f$}.  The definition of a Darboux (co)frame  is equivalent to say that the vectors $\set{E_{i}}$ (locally) span
$f_*TM$, the image of $TM$ through $f$ in $TN$, while the vectors $\{E_\alpha\}$ are orthogonal to $f_*TM$ and span in fact the \emph{normal bundle}
$TM^\perp$, that is the set of (local) vector fields in $N$ that are orthogonal to $f_*TM$. A consequence of the choice of a Darboux frame is that
\begin{equation}\label{dar}
f^*\theta^\alpha = 0,
\end{equation}
where $f^*\theta^\alpha$ is the pullback of $\theta^\alpha$ by the map $f$.
Indeed, for every $i$, $(f^*\theta^\alpha)(e_i)= \theta^\alpha(f_*e_i)=\theta^\alpha(E_i)=0$.

Let now $\set{\theta^a_b}$ be the Levi-Civita connection forms of $N$ relative to $\set{\theta^a}$. Pulling-back on $M$ the first structure equation of $N$,
and using the properties of the pullback we have
\[
f^*\pa{d\theta ^a}=d\pa{f^*\theta ^a }=-f^*\pa{\theta^a_b\wedge \theta^b} = -\pa{f^*\theta^a_b}\wedge \pa{f^*\theta^b}.
\]
By \eqref{dar} we obtain in particular that
\begin{equation}
  d\pa{f^*\theta^i} = -\pa{f^*\theta^i_j}\wedge \pa{f^*\theta^j};
\end{equation}
moreover we obviously have
\[
f^*\pa{\theta^i_j}+f^*\pa{\theta^j_i}=0.
\]
Thus from the uniqueness, see Proposition \ref{GP1.p1}, we deduce that $\set{f^*\theta^i_j}$ are the Levi-Civita connection forms of $M$.


Since the pullback commutes with exterior differentiation and wedge product we shall omit from now on the pullback. From  the context the reader
should be able to distinguish between forms or tensors. Then equation \eqref{dar} becomes
  \begin{equation}
\label{GP4.2}
\theta^{\alpha}=0 \, \text{ on }\, M
\end{equation}
and on $M$ we have
\begin{equation}
\label{GP4.4}
\theta^{i}_{j}+\theta^{j}_i=0
\end{equation}
and
\begin{equation}
\label{GP4.5}
d\theta^{i}=-\theta^{i}_j\wedge\theta^{j}.
\end{equation}

To obtain further information we differentiate \eqref{GP4.2}, use \eqref{GP4.5} and \eqref{GP4.2} again to obtain
\begin{equation}
\label{GP4.6}
0=d\theta^{\alpha}=-\theta^\alpha_{i}\wedge\theta^{i}-\theta^\alpha_\beta\wedge\theta^\beta=
-\theta^\alpha_{i}\wedge\theta^{i}.
\end{equation}
Hence a simple computation shows that there exist locally smooth functions $h^\alpha_{ij}$ such that
\begin{equation}
\label{GP4.7}
\theta^\alpha_{i}=h^\alpha_{ij}\theta^{j}
\end{equation}
and
\begin{equation}
\label{GP4.8}
h^\alpha_{ij}=h^\alpha_{ji}.
\end{equation}
It can be shown that the $h^\alpha_{ij}$'s are the coefficients  of the \emph{second fundamental tensor} $\sff : TM \times TM \ra TM^\perp$ of the
immersion. $\sff$ is a $(1, 2)$-tensor \emph{along $f$} or, equivalently, a section of $T^*M\otimes T^*M\otimes TM^\perp$ (where $TM^\perp$ is
considered  as a subset of the \emph{pullback bundle} $f^*TN$) which in the present setting is defined by
\begin{equation}
\label{GP4.9}
\sff=h^{\alpha}_{ij}\theta^{i}\otimes\theta^{j}\otimes E_{\alpha}.
\end{equation}
 One can also verify that by \eqref{GP4.8} $\sff$ is defined globally and that it is symmetric. The \emph{mean curvature vector field} is given by its normalized trace, that is
\[
\mathbf{H}=\frac{1}{m}\text{tr}(\sff)=\frac{1}{m}h^\alpha_{ii}E_\alpha.
\]

If $\nu$ is a unit normal vector field the \emph{mean curvature in the direction of $\nu$} is
\[
h^\nu=\pair{\mathbf{H}, \nu}_N.
\]

If $m+1=n$ and both the hyper surface $M$ and $N$ are orientable, we can choose Darboux frames along $f$ preserving orientations, i.e. such that
$\theta^1\wedge\cdots\wedge\theta^{m+1}$ and $\theta^1\wedge\cdots\wedge\theta^{m}$ give the correct orientations. More precisely the vector field
$E_{m+1}$ dual to $\theta^{m+1}$ on $N$ is, when restricted to $M$, a global normal vector field on $M$. We shall call it $\nu$. Furthermore note
that in this case in local coordinates one has
\[
h_{ij}=-g\pa{\nabla_{e_i}\nu,e_j}\qquad\text{for}\;\;i,j=1,\ldots,m,
\]
which in global notation can be expressed as
\begin{equation}\label{e100f}
\sff(X, Y)=-g\pa{\nabla_X \nu, Y} \quad \textrm{for any}\;\; X, Y\in TM\,.
\end{equation}
The mean curvature in the direction of $\nu$ is called the \emph{mean curvature of the immersed hypersurface}  and denoted by $H$. Observe that,
according to our sign convention, the mean curvature of the sphere $\mathbb S^{m}\subset \mathbb R^{m+1}$, with respect to the outer normal
$\frac{\partial}{\partial r},$ is $-1$\,. Note that, with respect to the notation used in Section \ref{plane}, if we consider $\partial\Omega$, the
boundary of a regular domain $\Omega\subset\mathbb{R}^2$, we have $H=-\kappa\,.$


\subsection{A decomposition  theorem}

\begin{theorem}\label{LemmaBoundary}
Let $(M, \metric)$ be an $m$-dimensional  Riemannian manifold  and
let $\Omega\subset M$ be a compact, orientable domain with boundary
$\partial \Omega$. Let $\mathrm{II}$ and $H$ denote respectively the
second fundamental tensor and the mean curvature of the embedding
$\partial \Omega\hookrightarrow \Omega$ in the direction of the
outward unit normal vector field $\nu$. Let $w\in
C^3(\overline{\Omega})$; if $w$ satisfies
\begin{equation}\label{Robin}
\frac{\partial w}{\partial \nu}=\pair{\nabla w, \nu}=-\alpha w \quad \text{ on }\, \partial\Omega
\end{equation}
 for some $\alpha \in \erre$, then
\begin{equation}\label{EQ_BoundaryRobin}
\frac{1}{2}\frac{\partial}{\partial \nu}\abs{\nabla w}^2 =
\sff\big(\tilde\nabla w, \tilde\nabla w\big)-\alpha|\tilde\nabla
w|^2 -\alpha w\Hess\pa{w}\pa{\nu, \nu} \quad \text{ on } \,
\partial\Omega,
\end{equation}
where $\tilde\nabla w = \nabla w-\pair{\nabla w, \nu}\nu$ is the  tangential gradient with respect to $\partial\Omega$.
\end{theorem}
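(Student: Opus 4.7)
The starting point is the basic identity
\[
\tfrac{1}{2}\,\partial_{\nu}|\nabla w|^2 \;=\; g(\nabla_{\nu}\nabla w,\nabla w) \;=\; \Hess(w)(\nabla w,\nu),
\]
valid pointwise on $\overline{\Omega}$, where in the last step we used the symmetry of the Hessian together with the fact that, for a smooth function, $\nabla_X\nabla w$ is the metric dual of $\Hess(w)(\cdot,X)$. The idea is then to split $\nabla w$ along $\partial\Omega$ into its tangential and normal components, use the Robin condition for the normal part, and use the defining property of $\sff$ for the tangential part.

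On $\partial\Omega$ the Robin condition gives $\pair{\nabla w,\nu}=-\alpha w$, hence
\[
\nabla w \;=\; \tilde\nabla w -\alpha w\,\nu \qquad\text{on }\partial\Omega.
\]
Substituting this into $\Hess(w)(\nabla w,\nu)$ and using bilinearity yields
\[
\tfrac{1}{2}\,\partial_{\nu}|\nabla w|^2 \;=\; \Hess(w)(\tilde\nabla w,\nu) - \alpha w\,\Hess(w)(\nu,\nu),
\]
which already accounts for the last term in the claimed formula. It remains to identify $\Hess(w)(\tilde\nabla w,\nu)$ with $\sff(\tilde\nabla w,\tilde\nabla w)-\alpha|\tilde\nabla w|^2$.

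For this, set $X=\tilde\nabla w$ (a vector field tangent to $\partial\Omega$) and apply the global formula
\[
\Hess(w)(X,\nu) \;=\; X(\nu(w)) - (\nabla_X\nu)(w).
\]
Since $X$ is tangent to $\partial\Omega$ and $\nu(w)=-\alpha w$ there, the first term equals $X(-\alpha w)=-\alpha\,g(\nabla w,X)=-\alpha|\tilde\nabla w|^2$, because $X$ is orthogonal to $\nu$. For the second term, write $(\nabla_X\nu)(w)=g(\nabla_X\nu,\nabla w)$; the normal component contributes nothing because $g(\nabla_X\nu,\nu)=\tfrac{1}{2}X(|\nu|^2)=0$, so only the tangential part survives:
\[
(\nabla_X\nu)(w) \;=\; g(\nabla_X\nu,\tilde\nabla w) \;=\; -\sff(X,\tilde\nabla w) \;=\; -\sff(\tilde\nabla w,\tilde\nabla w),
\]
where the middle identity is exactly \eqref{e100f}. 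Combining these pieces gives the asserted decomposition.

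\textbf{Main obstacle.} There is no deep obstruction: the result is essentially a bookkeeping exercise once one recognizes the symmetric-Hessian identity and the sign convention \eqref{e100f} for $\sff$. The only delicate point is conceptual rather than computational, namely being careful that the derivatives $X(\nu(w))$ and $\partial_\nu|\nabla w|^2$ are taken of quantities defined in a neighborhood of $\partial\Omega$ (so that $\nu$ is extended smoothly), and that the eventual reduction uses only the boundary values; the terms involving the extension cancel through the symmetry of $\Hess(w)$ and the fact that $|\nu|\equiv 1$ along any admissible extension.
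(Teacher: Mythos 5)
Your proof is correct, and it reaches the identity by a coordinate-free route where the paper works entirely in a Darboux frame $\{e_1,\dots,e_{m-1},e_m=\nu\}$: the paper pulls back the covariant-derivative relation $dw_m=w_{mB}\theta^B+w_i\theta^i_m$ to $\partial\Omega$ to get $w_{mi}=H_{ij}w_j-\alpha w_i$, and then expands $\tfrac12(\abs{\nabla w}^2)_m=w_iw_{im}+w_mw_{mm}$. Your steps are the invariant translation of exactly this computation: the identity $\tfrac12\partial_\nu\abs{\nabla w}^2=\Hess(w)(\nabla w,\nu)$ plays the role of $w_Bw_{Bm}$, and the formula $\Hess(w)(X,\nu)=X(\nu(w))-(\nabla_X\nu)(w)$ combined with \eqref{e100f} reproduces $w_iw_{mi}=H_{ij}w_iw_j-\alpha w_iw_i$. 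What your version buys is independence from the moving-frame machinery of Section 3 (only the symmetric-Hessian identity and the global definition \eqref{e100f} of $\sff$ are needed), at the cost of having to address the extension of $\nu$ off $\partial\Omega$ — which the frame computation sidesteps because everything is intrinsically a boundary quantity. One small refinement to your closing remark: no cancellation between extension-dependent terms is actually required, since $X=\tilde\nabla w$ is tangent to $\partial\Omega$, so $X(\nu(w))$ sees only the restriction of $\nu(w)$ to $\partial\Omega$, and $\nabla_X\nu$ at a boundary point is computed along a curve inside $\partial\Omega$; each term is therefore individually determined by the boundary values of $\nu$.
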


\begin{proof}
Let $\set{e_A} = \{e_1,\ldots, e_{m-1}, e_m=\nu\}$ be a Darboux frame along $\partial M\hookrightarrow M$. Set
\[
H_{ij}=\pair{\mathrm{II}(e_i,e_j),\nu},
\]
so that
\[
H=\frac{1}{m-1}H_{kk},
\]
where $1\leq i,j,k\leq m-1$.
By definition of the covariant derivative we have
\[
dw_m = w_{mB}\theta^B + w_i\theta^i_m,
\]
thus
\[
w_{mB}\theta^B = w_{i}\theta^m_i + dw_m.
\]
Pulling back the previous relation  to $\partial\Omega$ and using \eqref{Robin} we deduce
\begin{equation}
  w_{mi} = H_{ij}w_j -\alpha w_i \quad \text{ on }\, \partial \Omega,
\end{equation}
which implies
\begin{align}\label{wiwmi}
  w_iw_{mi} &= H_{ij}w_iw_j -\alpha w_iw_i \quad \text{ on }\, \partial \Omega.
\end{align}

We now have
\[
\frac{1}{2}\big(\abs{\nabla w}^2\big)_A = w_Bw_{BA} = w_iw_{iA}+
w_mw_{mA} = w_iw_{iA}-\alpha ww_{mA} \quad \text{ on }\, \partial
\Omega,
\]
thus
\begin{equation}\label{12frac}
  \frac{1}{2}\frac{\partial}{\partial \nu}\abs{\nabla w}^2=\frac{1}{2}\langle\nabla\abs{\nabla w}^2, \nu\rangle=w_iw_{im}+w_mw_{mm}=w_iw_{im}-\alpha ww_{mm} \quad \text{ on }\, \partial \Omega.
\end{equation}
Combining \eqref{wiwmi} and \eqref{12frac} we get the desired result.
\end{proof}

We conclude the section  by recalling a relation between the Laplace--Beltrami operator $\Delta$ of the manifold $(M,g)$ acting on a smooth function
$w$ defined in a neighborhood of the boundary $\partial\Omega$ and the Laplace-Beltrami operator $\tilde \Delta$ of
the manifold $\partial \Omega$, acting on the trace of the function $w$ on $\partial\Omega$. Let $H$ be the mean curvature of $\partial \Omega$ and
$\tilde\Delta$ be the Laplace--Beltrami operator of the manifold $\partial \Omega$ endowed with the metric induced by the embedding $\partial
\Omega\hookrightarrow M$. Then on has, see e.g. \cite{YamabeBook}
\begin{equation}\label{Dario2}
\Delta w= \tilde\Delta w-(m-1)H\frac{\partial
w}{\partial\nu}+\Hess(w)(\nu,\nu).
\end{equation}
\begin{exe}\label{surfaceB}
Let $\Omega$ be a domain on a two-dimensional surface as in Example \ref{surfaceW}. We consider as before its conformal projection onto the plane. We
shall use the same notation as in Section 2 and Example \ref{surfaceW}. In this case we have $\p/\p\nu= -p^{-1}\p\rho$ and the expression
\eqref{12frac} reads as
$$
\frac{\p}{\p\nu}|\nabla u|^2 =-p^{-3}|\nabla_R u|^2_\rho-p^{-1}|\nabla_R u|^2(p^{-2})_\rho
$$
\end{exe}
Keeping in mind that $ p^{-1}(\kappa -\p_\rho \log p)=\kappa_g$ is the geodesic curvature of $\p \Omega$ we find by the arguments in Section 2
\begin{align}\label{boundary2}
\frac{1}{2}\frac{\p}{\p\nu}|\nabla u|^2=p^{-3}(-u_su_{s\rho} -\kappa u_s^2-u_\rho u_{\rho\rho}) +p^{-4}p_\rho (u_\rho^2+u_s^2)\\
\nonumber =-p^{-3}(u_su_{s\rho}+u_\rho u_{\rho\rho}) -p^{-2}\kappa_g u_s^2+p^{-4}p_\rho u^2_\rho.
\end{align}
Taking into account the boundary condition $u_\rho=\alpha u$ we obtain
$$
\frac{1}{2}\frac{\p}{\p\nu}|\nabla u|^2=-\frac{u_s^2}{p^2}(\kappa_g +\alpha )+\frac{\alpha u}{p^2}\left[u_\rho (\log p)_\rho -u_s(\log p)_s - u_{\rho \rho}\right].
$$
\begin{rem} Similar results to Theorem \ref{LemmaBoundary} in the special case where $M=\erre^m$ were used by L.E. Payne and his collaborators in their study of a priori bounds for elliptic problems. This theorem provides a tool to determine the points where the {\sl P-function} takes its maximum. A survey of these results is found in \cite{Sp81}.
\end{rem}


\section{Instability results on Riemannian manifolds}\label{SecDario}

Let $\Omega\subset (M,g)$ be a smooth bounded domain and let
$u:\Omega\to\erre$ be a solution of
\begin{equation}\label{Dario0}
\begin{cases}
\Delta u+f(u)=0\qquad\text{ in }\Omega,\\
\frac{\partial u}{\partial\nu}+\alpha u=0\qquad\text{ on
}\partial\Omega,
\end{cases}
\end{equation}
where $\nu$ denotes the outer normal unit vector at $\partial\Omega$
and $f\in C^1$. Define
\begin{equation}\label{Dario5}
\lambda_1:=\inf_{\phi\in
H^1(\Omega),\,\phi\not\equiv0}\frac{\int_\Omega|\nabla\phi|^2\,d\mu-\int_\Omega
f'(u)\phi^2\,d\mu+
\alpha\int_{\partial\Omega}\phi^2\,d\sigma}{\int_\Omega\phi^2\,d\mu}.
\end{equation}
We note that by standard elliptic theory $\lambda_1$ is achieved by a function $\phi_1\in H^1(\Omega)$  which is a positive solution of \eqref{e6f}.

Note that the case $\alpha=0$ corresponds to the problem of
homogeneous Neumann boundary conditions which has already been
studied in \cite{BPT}.
\bigskip

We first consider the case of constant solutions to problem \eqref{Dario0}. It follows immediately from the boundary conditions that $u\equiv 0$ is
the only possibility. The equation implies that $f(0)=0.$ Let
\begin{equation}\label{Dario15}
\Lambda_1:=\inf_{\phi\in
H^1(\Omega),\,\phi\not\equiv0}\frac{\int_\Omega|\nabla\phi|^2\,d\mu+
\alpha\int_{\partial\Omega}\phi^2\,d\sigma}{\int_\Omega\phi^2\,d\mu}
\end{equation}
be the smallest eigenvalue of
\begin{equation}\label{Dario16}
\begin{cases}
-\Delta \varphi_1=\Lambda_1\varphi_1\qquad\text{ in }\Omega,\\
\frac{\partial\varphi_1}{\partial\nu} +\alpha
\varphi_1=0\qquad\text{ on }\partial\Omega.
\end{cases}
\end{equation}
By \eqref{Dario5} we have immediately
\begin{proposition}\label{prop_Dario_2}
Let $\alpha\neq0$ and let $u$ be a constant solution of problem
\eqref{Dario0}. Then $u\equiv0$ and $f(0)=0$. Moreover, for
$\Lambda_1$ defined in \eqref{Dario15},
\begin{itemize}
\item[i)] if $f'(0)>\Lambda_1$ then $u\equiv0$ is unstable,
\item[i)] if $f'(0)<\Lambda_1$ then $u\equiv0$ is asymptotically stable.
\end{itemize}
\end{proposition}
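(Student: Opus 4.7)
The proof essentially unfolds in two short steps, and I would present it as such.

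First I would handle the classification of constant solutions. If $u \equiv c$ for some $c \in \erre$, then $\Delta u = 0$, so the equation in \eqref{Dario0} forces $f(c) = 0$. The boundary condition reduces to $\alpha c = 0$, and since we assume $\alpha \neq 0$, we must have $c = 0$, hence $f(0) = 0$. This step is immediate and requires no substantive work.

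For the stability dichotomy, the plan is to compare the Rayleigh quotient defining $\lambda_1$ in \eqref{Dario5} with the one defining $\Lambda_1$ in \eqref{Dario15}. Evaluating $f'(u) = f'(0)$ (a constant) in the numerator of \eqref{Dario5}, the $f'(0) \int_\Omega \phi^2 \, d\mu$ term pulls outside the infimum as an additive constant, giving the identity
\begin{equation*}
\lambda_1 = \Lambda_1 - f'(0).
\end{equation*}
The sign of $\lambda_1$ is therefore determined by the sign of $\Lambda_1 - f'(0)$: if $f'(0) > \Lambda_1$ then $\lambda_1 < 0$ and $u \equiv 0$ is unstable by the definition of stability recalled in the introduction, while if $f'(0) < \Lambda_1$ then $\lambda_1 > 0$ and $u \equiv 0$ is asymptotically stable.

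There is no real obstacle here; the whole content is the observation that with a constant $u$ the linearized operator in \eqref{e6f} is just $-\Delta - f'(0)$, so its first Robin eigenvalue differs from $\Lambda_1$ by the shift $-f'(0)$. The only minor care needed is to note that the minimizer $\varphi_1$ of \eqref{Dario15} is admissible in \eqref{Dario5} (it lies in $H^1(\Omega)$ and is not identically zero), which justifies that the infimum in \eqref{Dario5} is actually attained and equals $\Lambda_1 - f'(0)$ rather than being merely bounded by it.
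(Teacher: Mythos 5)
Your proof is correct and follows the same route as the paper, which derives the proposition immediately from the variational characterization \eqref{Dario5}: for $u\equiv 0$ the term $f'(0)$ is constant, so $\lambda_1=\Lambda_1-f'(0)$ and the sign dichotomy follows. (The final remark about needing the minimizer $\varphi_1$ is unnecessary, since the two Rayleigh quotients differ by the additive constant $-f'(0)$ for every test function, so their infima differ by exactly that constant whether or not they are attained.)
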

\begin{rem}
If we use $\phi\equiv1\in H^1(\Omega)$ as a test function in the definition \eqref{Dario15} of $\Lambda_1$ we get the upper bound
 $$
\Lambda_1\leq\alpha\frac{|\partial\Omega|}{|\Omega|},$$
where $$|\Omega|:=\int_\Omega d\mu,\qquad
|\partial\Omega|:=\int_{\partial\Omega}d\sigma.
$$
This together with Proposition \ref{prop_Dario_2} implies that the solution $u\equiv0$  is unstable if
$f'(0)>\alpha\frac{|\partial\Omega|}{|\Omega|}$
\end{rem}
\bigskip

We now discuss the stability of non constant solutions to problem
\eqref{Dario0}.

\begin{proposition}\label{prop_Dario_1}
Let $u\in C^3(\overline\Omega)$ be a solution of \eqref{Dario0},
with $f\in C^1$. Then
\begin{eqnarray}\label{Dario9}
\lambda_1\int_{\Omega} \abs{\nabla
u}^2\,d\mu&\leq&\int_{\partial\Omega}\pa{\sff\big(\tilde\nabla u,
\tilde\nabla u\big)-\alpha|\tilde\nabla u|^2+\alpha^3u^2+\alpha u
f(u)+\alpha^2(m-1)u^2H} \,d\sigma\\
\nonumber&&-\int_\Omega\ricc \pa{\nabla u, \nabla u}\,d\mu.
\end{eqnarray}
\end{proposition}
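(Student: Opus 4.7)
The strategy is to use the variational characterization \eqref{Dario5} of $\lambda_1$ with test function $\phi=|\nabla u|$, convert the Dirichlet energy of this test function into an integral of $|\Hess(u)|^2$ via Kato's inequality, and then use the Bochner--Weitzenb\"ock formula \eqref{Dario1} to reduce everything to a boundary integral plus a Ricci term. Finally, Theorem \ref{LemmaBoundary} together with the decomposition \eqref{Dario2} will shape the boundary integrand into the desired form.

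\textbf{Bulk step.} Testing \eqref{Dario5} with $\phi=|\nabla u|$ (after the standard regularization $\sqrt{|\nabla u|^2+\varepsilon}\to|\nabla u|$ to handle the critical set of $u$) and invoking Kato's inequality $|\nabla|\nabla u||^2\leq|\Hess(u)|^2$ almost everywhere, I obtain
\begin{equation*}
\lambda_1\int_\Omega|\nabla u|^2\,d\mu\;\leq\;\int_\Omega|\Hess(u)|^2\,d\mu-\int_\Omega f'(u)|\nabla u|^2\,d\mu+\alpha\int_{\partial\Omega}|\nabla u|^2\,d\sigma.
\end{equation*}
Now I integrate \eqref{Dario1} over $\Omega$. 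Since $\Delta u=-f(u)$ gives $\langle\nabla\Delta u,\nabla u\rangle=-f'(u)|\nabla u|^2$, and the divergence theorem yields $\int_\Omega\frac{1}{2}\Delta|\nabla u|^2\,d\mu=\frac{1}{2}\int_{\partial\Omega}\partial_\nu|\nabla u|^2\,d\sigma$, I get
\begin{equation*}
\int_\Omega|\Hess(u)|^2\,d\mu-\int_\Omega f'(u)|\nabla u|^2\,d\mu=\frac{1}{2}\int_{\partial\Omega}\partial_\nu|\nabla u|^2\,d\sigma-\int_\Omega\ricc(\nabla u,\nabla u)\,d\mu.
\end{equation*}
Combining these two displays gives
\begin{equation*}
\lambda_1\int_\Omega|\nabla u|^2\,d\mu\leq\int_{\partial\Omega}\left[\tfrac{1}{2}\partial_\nu|\nabla u|^2+\alpha|\nabla u|^2\right]d\sigma-\int_\Omega\ricc(\nabla u,\nabla u)\,d\mu.
\end{equation*}

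\textbf{Boundary step.} On $\partial\Omega$, Theorem \ref{LemmaBoundary} applied to $w=u$ yields
\begin{equation*}
\tfrac{1}{2}\partial_\nu|\nabla u|^2=\sff(\tilde\nabla u,\tilde\nabla u)-\alpha|\tilde\nabla u|^2-\alpha u\,\Hess(u)(\nu,\nu),
\end{equation*}
while $|\nabla u|^2=|\tilde\nabla u|^2+\alpha^2u^2$ from the Robin condition. Using \eqref{Dario2} together with $\Delta u=-f(u)$ and $\partial_\nu u=-\alpha u$, I solve for
\begin{equation*}
\Hess(u)(\nu,\nu)=-f(u)-\tilde\Delta u-(m-1)H\alpha u\qquad\text{on }\partial\Omega.
\end{equation*}
Substituting, the boundary integrand becomes
\begin{equation*}
\sff(\tilde\nabla u,\tilde\nabla u)-\alpha|\tilde\nabla u|^2+\alpha^3u^2+\alpha u f(u)+\alpha u\,\tilde\Delta u+\alpha|\tilde\nabla u|^2+\alpha^2(m-1)u^2H.
\end{equation*}
Because $\partial\Omega$ is a closed manifold, integration by parts on it converts $\int_{\partial\Omega}\alpha u\,\tilde\Delta u\,d\sigma$ into $-\int_{\partial\Omega}\alpha|\tilde\nabla u|^2\,d\sigma$, which cancels the extra $+\alpha|\tilde\nabla u|^2$ and leaves the required $-\alpha|\tilde\nabla u|^2$ term. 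Assembling the pieces gives \eqref{Dario9}.

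\textbf{Main obstacle.} The only genuinely delicate point is the use of $|\nabla u|$ as a test function: it is only Lipschitz, not $C^1$, and Kato's inequality is needed at the zero set of $\nabla u$. This is handled in the standard way by first working with $\phi_\varepsilon=\sqrt{|\nabla u|^2+\varepsilon}$ (which is smooth), for which $|\nabla\phi_\varepsilon|^2\leq|\Hess(u)|^2$ pointwise, and then passing to the limit $\varepsilon\to 0^+$ using dominated convergence, which is legitimate since $u\in C^3(\overline\Omega)$. Everything else is a bookkeeping exercise combining the Bochner identity, the novel boundary decomposition of Theorem \ref{LemmaBoundary}, and the extrinsic/intrinsic Laplacian formula \eqref{Dario2}.
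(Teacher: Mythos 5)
Your proposal is correct and follows essentially the same route as the paper: test the Rayleigh quotient with $|\nabla u|$, use the Kato/Schwarz inequality $|\nabla|\nabla u||^2\le|\Hess(u)|^2$ together with the integrated Bochner--Weitzenb\"ock identity, then rewrite the boundary term via Theorem \ref{LemmaBoundary} and \eqref{Dario2}, integrating $\alpha u\tilde\Delta u$ by parts on the closed manifold $\partial\Omega$. The only (harmless) addition is your explicit $\varepsilon$-regularization of the test function, which the paper leaves implicit.
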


\begin{proof}[Proof of Proposition \ref{prop_Dario_1}]
If we introduce in the variational characterization \eqref{Dario5}  of $\lambda_1$   the test function $|\nabla u|^2$ we get
\begin{equation}\label{Dario6}
\lambda_1\int_{\Omega} \abs{\nabla
u}^2\,d\mu\leq\int_\Omega|\nabla|\nabla u||^2\,d\mu-\int_\Omega
f'(u)|\nabla u|^2\,d\mu+ \alpha\int_{\partial\Omega}|\nabla
u|^2\,d\sigma.
\end{equation}
If we apply the
Bochner-Weitzenb\"ock formula \eqref{Dario1} to the solution $u$ of \eqref{Dario0} and use the divergence theorem
we obtain
 \begin{eqnarray}
 \label{Dario7} \int_\Omega \abs{\Hess(u)}^2\,d\mu &=&\frac{1}{2}\int_\Omega\Delta \abs{\nabla u}^2\,d\mu
   -\int_\Omega\ricc \pa{\nabla u, \nabla u}\,d\mu - \int_\Omega\pair{\nabla \Delta u, \nabla u}\,d\mu\\
\nonumber&=&\frac{1}{2}\int_{\partial\Omega}
\frac{\partial}{\partial\nu} \abs{\nabla u}^2\,d\sigma
   -\int_\Omega\ricc \pa{\nabla u, \nabla u}\,d\mu + \int_\Omega f'(u)\abs{\nabla u}^2\,d\mu.
\end{eqnarray}
The first integral at the right-hand side of \eqref{Dario6} can be estimated by means  of the inequality
$$\abs{\nabla\abs{\nabla u}}^2\leq\abs{\Hess(u)}^2.$$
This result follows immediately from Schwarz's inequality. Indeed if we use a local orthonormal frame (see Section 3.1) then
$$
 \frac{u_{ik}u_k u_{ji}u_i}{u_su_s}\leq u_{ik}u_{ik}
$$
which is the desired result, see also for instance \cite[formula (3.6)]{BPT}. The Hessian is then replaced by the expression in \eqref{Dario7} and
inserted in \eqref{Dario6}. This leads to the inequality

\begin{equation}\label{Dario8}
\lambda_1\int_{\Omega} \abs{\nabla
u}^2\,d\mu\leq\frac{1}{2}\int_{\partial\Omega}
\frac{\partial}{\partial\nu} \abs{\nabla u}^2\,d\sigma
   -\int_\Omega\ricc \pa{\nabla u, \nabla u}\,d\mu + \alpha\int_{\partial\Omega}|\nabla
u|^2\,d\sigma.
\end{equation}
Now by \eqref{EQ_BoundaryRobin} and \eqref{Dario2},
taking into account that $\frac{\partial u}{\partial\nu}=-\alpha u$ on
$\partial\Omega$, we have
\begin{equation*}
\frac{1}{2}\frac{\partial}{\partial \nu}\abs{\nabla u}^2 =
\sff\big(\tilde\nabla u, \tilde\nabla u\big)-\alpha|\tilde\nabla
u|^2 +\alpha u\tilde\Delta u+\alpha u f(u)+\alpha^2(m-1)u^2H
\end{equation*}
on $\partial\Omega$. Integrating over $\partial\Omega$ and
substituting into \eqref{Dario8} we deduce
\begin{eqnarray}\label{Dario11}
\lambda_1\int_{\Omega} \abs{\nabla
u}^2\,d\mu&\leq&\int_{\partial\Omega}\pa{\sff\big(\tilde\nabla u,
\tilde\nabla u\big)-\alpha|\tilde\nabla u|^2 +\alpha u\tilde\Delta
u+\alpha u f(u)+\alpha^2(m-1)u^2H} \,d\sigma\\
\nonumber  && -\int_\Omega\ricc \pa{\nabla u, \nabla u}\,d\mu +
\alpha\int_{\partial\Omega}|\nabla u|^2\,d\sigma.
\end{eqnarray}
Since $\partial\Omega$ is a manifold without boundary we have
by the divergence theorem
$$\int_{\partial\Omega} u\tilde\Delta
u\,d\sigma=-\int_{\partial\Omega}|\tilde\nabla u|^2\,d\sigma.$$
On $\partial \Omega$ there holds
$$|\nabla u|^2=|\tilde\nabla
u|^2+\left|\frac{\partial u}{\partial\nu}\right|^2=|\tilde\nabla
u|^2+\alpha^2u^2.
$$
Substitution into \eqref{Dario11} leads to
\begin{eqnarray*}
\lambda_1\int_{\Omega} \abs{\nabla
u}^2\,d\mu&\leq&\int_{\partial\Omega}\pa{\sff\big(\tilde\nabla u,
\tilde\nabla u\big)+ \alpha|\nabla u|^2-2\alpha|\tilde\nabla
u|^2+\alpha u f(u)+\alpha^2(m-1)u^2H} \,d\sigma\\
\nonumber&&-\int_\Omega\ricc \pa{\nabla u, \nabla u}\,d\mu\\
\nonumber&=&\int_{\partial\Omega}\pa{\sff\big(\tilde\nabla u,
\tilde\nabla u\big)-\alpha|\tilde\nabla u|^2+\alpha^3u^2+\alpha u
f(u)+\alpha^2(m-1)u^2H} \,d\sigma\\
\nonumber&&-\int_\Omega\ricc \pa{\nabla u, \nabla u}\,d\mu
\end{eqnarray*}
which completes the proof.
\end{proof}

 We are now in position to state our main result.

\begin{theorem}\label{thm_Dario1}
Let $u\in C^3(\overline\Omega)$ be a solution of \eqref{Dario0} with $f\in C^1$. Assume that $\ricc\geq0$ in $\Omega$ and that for every
$p\in\partial\Omega$ the quadratic form $\sff-\alpha \tilde{g}$ on the tangent space $T_p(\partial\Omega)$,  where $\tilde{g}$ is the restriction of
the metric $g$ on $T_p(\partial \Omega)$, is nonpositive definite. If in addition
 \begin{equation}\label{cond.}
\int_{\partial\Omega}\alpha^3u^2+\alpha uf(u)+\alpha^2(m-1)u^2H\,d\sigma<0,
\end{equation}
then $u$ is unstable.
\end{theorem}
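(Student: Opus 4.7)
The plan is to apply Proposition~\ref{prop_Dario_1} directly and check that every term on the right-hand side of \eqref{Dario9} has a favorable sign under the hypotheses, producing a strictly negative upper bound on $\lambda_1 \int_\Omega |\nabla u|^2\, d\mu$.

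First, I would regroup the boundary integrand by isolating its tangential quadratic part,
\[
\sff(\tilde\nabla u, \tilde\nabla u) - \alpha\,|\tilde\nabla u|^2 = (\sff - \alpha\,\tilde g)(\tilde\nabla u, \tilde\nabla u),
\]
which is pointwise nonpositive on $\partial\Omega$ by the hypothesis that $\sff - \alpha\,\tilde g$ is nonpositive definite at every boundary point. The remaining boundary contribution $\alpha^3 u^2 + \alpha\, u\, f(u) + \alpha^2(m-1)\,u^2 H$ integrates to a strictly negative quantity by assumption \eqref{cond.}. The interior term $-\int_\Omega \ricc(\nabla u,\nabla u)\, d\mu$ is $\leq 0$ since $\ricc \geq 0$. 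Summing these three contributions makes the right-hand side of \eqref{Dario9} strictly less than zero.

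To conclude $\lambda_1 < 0$ I still need $\int_\Omega |\nabla u|^2\, d\mu > 0$, i.e.\ $u$ non-constant. Observe first that $\alpha = 0$ is incompatible with \eqref{cond.} (the integrand would vanish identically), so $\alpha \neq 0$. If $u$ were constant, the Robin condition in \eqref{Dario0} would then force $u \equiv 0$, reducing \eqref{cond.} to $0 < 0$, a contradiction. Hence $u$ is non-constant, $\int_\Omega |\nabla u|^2\, d\mu > 0$, and dividing in \eqref{Dario9} yields $\lambda_1 < 0$, which by definition means $u$ is unstable.

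I do not expect any real obstacle here: the substantive analytic content has already been absorbed into Proposition~\ref{prop_Dario_1}, which in turn rests on the Bochner--Weitzenb\"ock identity~\eqref{Dario1}, the decomposition of $\partial_\nu |\nabla w|^2$ in Theorem~\ref{LemmaBoundary}, and the trace formula~\eqref{Dario2} relating $\Delta$ and $\tilde\Delta$. The delicate point of the present argument is simply the clean identification of the tangential boundary integrand with the quadratic form $\sff - \alpha\,\tilde g$ appearing in the hypothesis, so that the nonpositive-definiteness assumption can be applied term by term.
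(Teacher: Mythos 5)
Your proposal is correct and follows exactly the paper's route: the authors also deduce the theorem directly from Proposition \ref{prop_Dario_1} (inequality \eqref{Dario9}), observing that the tangential boundary term is the nonpositive quadratic form $\sff-\alpha\tilde g$ evaluated on $\tilde\nabla u$, that the Ricci term is nonpositive, and that \eqref{cond.} makes the remainder strictly negative. Your explicit check that $u$ is non-constant (so that $\int_\Omega|\nabla u|^2\,d\mu>0$ and one may divide) is a detail the paper leaves as ``immediate,'' and it is handled correctly.
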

\begin{proof}[Proof of Theorem \ref{thm_Dario1}]
Using \eqref{Dario9} it is immediate to see that under our
assumptions $\lambda_1$ as defined in formula \eqref{Dario5} is
strictly negative, so that $u$ is an unstable solution of
\eqref{Dario0}.
\end{proof}
Next we  extend this result to the case where condition \eqref{cond.} is relaxed relaxed.
\begin{theorem}\label{thm_Dario0}
Assume that all assumptions of Theorem \ref{thm_Dario1} hold except for condition \eqref{cond.} which is replaced by
\begin{equation}\label{Dario10}
\int_{\partial\Omega}\alpha^3u^2+\alpha
uf(u)+\alpha^2(m-1)u^2H\,d\sigma\leq0.
\end{equation}
Suppose moreover that $u\not\equiv 0$ and
\begin{itemize}
\item[(i)] $\alpha>0$, or
\item[(ii)] $\alpha<0$ and $u$ does not change sign, i.e. either $u\geq0$ or $u\leq0$ on $\overline\Omega$.
\end{itemize}
Then $u$ is unstable.
\end{theorem}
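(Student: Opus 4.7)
The plan is to argue by contradiction. I would suppose $\lambda_1\geq 0$ and derive a contradiction, the main new step compared with Theorem \ref{thm_Dario1} being that since \eqref{Dario10} is only a non-strict inequality, \eqref{Dario9} alone yields $\lambda_1\leq 0$ and we still need to exclude the borderline case $\lambda_1=0$.

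First I would record that $u$ is necessarily non-constant: a constant solution of \eqref{Dario0} with $\alpha\neq 0$ must vanish, contradicting $u\not\equiv 0$, so $\int_\Omega|\nabla u|^2\,d\mu>0$. Under the standing hypotheses each of the three pieces on the right-hand side of \eqref{Dario9}, namely $-\int_\Omega\ricc(\nabla u,\nabla u)\,d\mu$, $\int_{\partial\Omega}(\sff(\tilde\nabla u,\tilde\nabla u)-\alpha|\tilde\nabla u|^2)\,d\sigma$, and $\int_{\partial\Omega}(\alpha^3u^2+\alpha uf(u)+\alpha^2(m-1)u^2H)\,d\sigma$, is non-positive (respectively by $\ricc\geq 0$, by the non-positive definiteness of $\sff-\alpha\tilde g$, and by \eqref{Dario10}). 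Together with $\lambda_1\geq 0$ and $\int_\Omega|\nabla u|^2\,d\mu>0$, this forces $\lambda_1=0$, each of the three summands to vanish, and every intermediate inequality in the proof of Proposition \ref{prop_Dario_1}, in particular the Rayleigh bound for the test function $|\nabla u|$ and the Schwarz inequality $|\nabla|\nabla u||^2\leq|\Hess(u)|^2$, to be saturated.

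Saturation of the Rayleigh bound means precisely that $|\nabla u|$ attains the infimum in \eqref{Dario5} at the level $0$, so $|\nabla u|$ is a first eigenfunction of \eqref{e6f}. Appealing to the strict positivity of $\phi_1$ recalled immediately after \eqref{Dario5} I would conclude $|\nabla u|>0$ on $\overline\Omega$. Hence $u$ has no critical points on the compact set $\overline\Omega$, so its maximum $u(p^+)$ and minimum $u(p^-)$ are attained on $\partial\Omega$, where the tangential gradient vanishes and, by extremality, $\partial_\nu u(p^+)\geq 0$ and $\partial_\nu u(p^-)\leq 0$. The Robin condition rewrites these inequalities as $-\alpha u(p^+)\geq 0$ and $-\alpha u(p^-)\leq 0$.

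In case (i), with $\alpha>0$, these give $u(p^+)\leq 0\leq u(p^-)$ and hence $u\equiv 0$, contradicting $u\not\equiv 0$. In case (ii), with $\alpha<0$ and $u\geq 0$ on $\overline\Omega$ (the case $u\leq 0$ being symmetric, using $p^+$ in place of $p^-$), the inequality $-\alpha u(p^-)\leq 0$ combined with $\alpha<0$ forces $u(p^-)\leq 0$, so $u(p^-)=0$; but then $\partial_\nu u(p^-)=-\alpha u(p^-)=0$ as well, and together with the vanishing tangential gradient this yields $|\nabla u(p^-)|=0$, contradicting $|\nabla u|>0$ on $\overline\Omega$. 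Either way $\lambda_1<0$ and $u$ is unstable. The most delicate point, I expect, is the identification of $|\nabla u|$ with a strictly positive first eigenfunction of \eqref{e6f} in the borderline case $\lambda_1=0$: it demands tracking the equality case through the entire chain of inequalities in the proof of Proposition \ref{prop_Dario_1} and using positivity of $\phi_1$ up to $\partial\Omega$; everything after that is classical maximum-principle bookkeeping at boundary extrema.
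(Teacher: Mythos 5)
Your proposal is correct and follows essentially the same route as the paper: reduce to the borderline case $\lambda_1=0$, identify $|\nabla u|$ as a nonnegative minimizer of the Rayleigh quotient and hence a first eigenfunction, and then contradict $u\not\equiv 0$ (case (i)) or the nonvanishing of $|\nabla u|$ at a boundary minimum of $u$ (case (ii)). The only difference is cosmetic: where you assert outright that $|\nabla u|>0$ on all of $\overline\Omega$, the paper proves exactly this at the critical boundary point $q$ via the strong maximum principle in the interior together with the Hopf lemma and the Robin condition $\partial_\nu|\nabla u|=-\alpha|\nabla u|$, which is the standard justification your "positivity of $\phi_1$ up to $\partial\Omega$" tacitly relies on.
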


\begin{proof}[Proof of Theorem \ref{thm_Dario0}]
We want to show that under our assumptions $\lambda_1$ as defined in
formula \eqref{Dario5} is strictly negative, so that $u$ is an
unstable solution of \eqref{Dario0}.

We first note that $u$ cannot be constant on $\Omega$ because the only constant solution of \eqref{Dario0} is $u\equiv0$. Thus, since $|\nabla
u|\not\equiv0$ in $\Omega$ it follows immediately from \eqref{Dario9} and our assumptions that $\lambda_1\leq0$. Let  $\alpha>0$ and suppose  that
$\lambda_1=0$. Then $|\nabla u|$ is a minimizer of the Rayleigh quotient given in \eqref{Dario5}. Hence $|\nabla u|$ is a nontrivial eigenfunction
associated to the eigenvalue $\lambda_1=0$. By the strong maximum principle we must have $|\nabla u|>0$ in $\Omega$, so that $u$ does not have any
critical point in $\Omega$. Since $\overline\Omega$ is compact, $u$ must achieve its absolute maximum over $\overline\Omega$ at a point
$p\in\partial\Omega$ and its absolute minimum at a point $q\in\partial\Omega$. By the Robin boundary conditions and since $\alpha>0$ we have
$$u(p)= -\frac{1}{\alpha}\frac{\partial
u}{\partial\nu}(p)\leq0,\qquad u(q)=-\frac{1}{\alpha}\frac{\partial
u}{\partial\nu}(q)\geq0,$$ so that for every $x\in\overline\Omega$
we have $$0\leq u(q)\leq u(x)\leq u(p)\leq 0,$$ which contradicts
our assumption $u\not\equiv0$. Then $\lambda_1<0$, and hence $u$ in
unstable.

Assume now  $\alpha<0$ and that $u\geq0$ in
$\overline \Omega$. If we assume by contradiction that
$\lambda_1=0$, arguing as above we see that $u$ must achieve its
absolute minimum over $\overline\Omega$ at a point
$q\in\partial\Omega$, where there holds
$$0\leq u(q)=-\frac{1}{\alpha}\frac{\partial
u}{\partial\nu}(q)\leq0.$$ Hence we see that $$u(q)=\frac{\partial u}{\partial\nu}(q)=0.$$ Since $q\in\partial\Omega$ is a minimum point for $u$, all
tangential derivatives of $u$ must vanish at $q$, so that $\tilde\nabla u(q)=0$. We conclude that $|\nabla u|(q)=0$, and hence $q$ is an absolute
minimum point for $|\nabla u|$. Since $|\nabla u|$ is an eigenfunction of problem \eqref{Dario0} associated to the eigenvalue $\lambda_1=0$ and since
by the strong maximum principle we have $|\nabla u|>0$ in $\Omega$, we conclude by the Hopf lemma that
\begin{equation}\label{Dario14}
\frac{\partial}{\partial\nu}|\nabla u|(q)<0.
\end{equation}
On the other hand, by the Robin boundary condition in \eqref{Dario16}, we must have $$\frac{\partial }{\partial\nu}|\nabla u|(q)=-\alpha|\nabla
u|(q)=0,$$ which contradicts \eqref{Dario14}. Thus we have $\lambda_1<0$ and $u$ is unstable.

The case that $\alpha<0$ and  $u\leq0$ in $\overline \Omega$ can be treated in similar way and the proof will thus be omitted.
\end{proof}
\begin{rem}
Note that the condition $\sff-\alpha \tilde{g}\leq0$ immediately
implies that $H=\frac{1}{m-1}\operatorname{Tr}(\sff)\leq\alpha$ on
$\partial\Omega$. Thus, under the above assumptions, condition
\eqref{Dario10} is automatically satisfied if
\begin{itemize}
\item[1)] $\alpha>0$ \; and \; $tf(t)\leq-\alpha^2mt^2$ \, for every \, $t\in\erre$, or \item[2)] $\alpha<0$ \; and \; $tf(t)\geq-\alpha^2mt^2$ \,
for every \, $t\in\erre$.
\end{itemize}
\end{rem}

\begin{rem}\label{ossDF}
Notice that if $\tilde\nabla u =0$ on
$\partial\Omega$, i.e. if $u$ is constant on each connected
component of $\partial\Omega$, then the hypothesis
\[\sff -\alpha \tilde g\leq 0\quad \textrm{in}\;\; T_p(\partial\Omega)\,\, \textrm{for every}\,\, p\in \partial\Omega\]
can be dropped in both Theorems \ref{thm_Dario1} and \ref{thm_Dario0}\,.
\end{rem}


\begin{exe} If $M$ is a two-dimensional manifold as in Example \ref{surfaceW} then the conditions of Theorem \ref{thm_Dario1} (see also Theorem \ref{thm_Dario0}) become
in view of the computation in Examples \ref{surfaceW} and \ref{surfaceB}
\end{exe}
\begin{align}
\label{c0}\Delta \log p \leq 0 \tx{in} \Omega,\\
\label{c1}\kappa_g + \alpha \geq 0 \tx{on} \partial \Omega,\\
\label{c2}\alpha -\kappa_g + \frac{f(u)}{\alpha u} < 0 \tx{on} \partial \Omega.
\end{align}
Observe that the conditions \eqref{c1} and \eqref{c2} coincide with ({\bf C1}) and ({\bf C2}) in Section 2.  Condition \eqref{c0} is satisfied for a
sphere.

\begin{rem} If $\operatorname{Ric}\geq 0$ in $\Omega$ and $\sff -\alpha\tilde g\leq
0$ on $\partial \Omega$ we obtain estimates for stable solutions. In this case $\lambda_1\geq 0$. Consequently
$$
\frac{f(u)}{\alpha u}\geq  - [\alpha +(m-1)H]\tx{somewhere on} \partial \Omega.
$$
Consider the following two examples.
\begin{itemize}
\item[$1.$] Let $f(u)=\lambda_1u$, so that $u$ is a solution of $\Delta u + \lambda_1 u=0$ in $\Omega$, $\frac{\partial }{\partial \nu}u +\alpha u=0$
on $\partial \Omega$ with $\alpha
>0$. Then
$$
\lambda _1 \geq -[\alpha^2 +(m-1)H_{\max} \alpha].
$$
\item[$2.$] Let $f(u)=-c^2u +|u|^{p-1}u$, so that $u$ is a solution of $\Delta u -c^2u +|u|^{p-1}u=0$ in $\Omega$, $\frac{\partial }{\partial \nu}u
+\alpha u=0$ on $\partial \Omega$ with $p>1$, $\alpha >0$\,.
\medskip
Then
$$
\max_{\bar \Omega}|u|^{p-1} \geq -[\alpha^2+(m-1)H_{\max}\alpha] +c^2.
$$
\end{itemize}
\end{rem}

We conclude the section with a Barta type inequality that gives a sufficient condition for stability and which will be used in Section
\ref{sectionS}.

\begin{lemma}\label{lemma1f}
Let $v$ be a stationary solution of problem \eqref{Dario0} in
$\Omega$. Let there exist a function $w\in C^2(\Omega)\cap
C^1(\overline \Omega)$ such that $w>0$ in $\overline \Omega$ and
\[\left\{
\begin{array}{ll}
\, \Delta  w\, +\, f'(v)w\, < \,0
 & \textrm{in}\,\;\,\Omega
\\& \\
\textrm{ } \displaystyle{\frac{\partial w}{\partial \nu}} +\alpha
w \geq 0 & \textrm{on}\;\;\,\partial \Omega\,.
\end{array}
\right.
\]
Then $v$ is asymptotically stable.
\end{lemma}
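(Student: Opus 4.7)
The plan is to recognize this as a Barta-type lemma, so the goal is simply to prove $\lambda_1>0$ for $\lambda_1$ defined by the Rayleigh quotient in \eqref{Dario5}, which by the paper's convention is the definition of asymptotic stability. By standard elliptic theory, $\lambda_1$ is attained by a principal eigenfunction $\phi_1\in H^1(\Omega)$ which can be chosen strictly positive and which solves
\[
\Delta\phi_1+f'(v)\phi_1+\lambda_1\phi_1=0 \tx{in} \Omega, \qquad \frac{\partial\phi_1}{\partial\nu}+\alpha\phi_1=0 \tx{on} \partial\Omega.
\]

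First I would multiply the strict differential inequality $\Delta w+f'(v)w<0$ by $\phi_1>0$ and integrate to get $\int_\Omega\phi_1\bigl(\Delta w+f'(v)w\bigr)\,d\mu<0$. The key step is then to apply Green's second identity to the pair $(w,\phi_1)$ (both of which are smooth enough since $w\in C^2(\Omega)\cap C^1(\overline{\Omega})$ and $\phi_1$ is smooth by elliptic regularity), obtaining
\[
\int_\Omega\bigl(\phi_1\Delta w-w\Delta\phi_1\bigr)\,d\mu=\int_{\partial\Omega}\Bigl(\phi_1\tfrac{\partial w}{\partial\nu}-w\tfrac{\partial\phi_1}{\partial\nu}\Bigr)\,d\sigma.
\]
Substituting the Robin condition $\partial_\nu\phi_1=-\alpha\phi_1$ on $\partial\Omega$, the boundary term collapses to $\int_{\partial\Omega}\phi_1\bigl(\tfrac{\partial w}{\partial\nu}+\alpha w\bigr)\,d\sigma$, which is nonnegative by hypothesis on $w$. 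Hence $\int_\Omega\phi_1\Delta w\,d\mu\geq\int_\Omega w\Delta\phi_1\,d\mu$.

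Next I would plug the eigenvalue equation into the previous estimate. Combining the two inequalities above gives
\[
0>\int_\Omega\phi_1\bigl(\Delta w+f'(v)w\bigr)\,d\mu\geq\int_\Omega w\bigl(\Delta\phi_1+f'(v)\phi_1\bigr)\,d\mu=-\lambda_1\int_\Omega w\phi_1\,d\mu,
\]
where the $f'(v)$-terms cancel exactly. Since $w,\phi_1>0$ on $\Omega$, the integral $\int_\Omega w\phi_1\,d\mu$ is strictly positive, forcing $\lambda_1>0$ and hence the asymptotic stability of $v$.

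I do not anticipate any genuine obstacle; the only points requiring care are justifying the integrations by parts (which are standard given the regularity assumed on $w$ and the smoothness of $\phi_1$) and ensuring that the strict inequality on the differential side survives the manipulation, which it does because multiplying the strict inequality by the strictly positive $\phi_1$ over a set of positive measure preserves strictness. Note that the argument uses the hypothesis $w>0$ up to the boundary only in the final step (to ensure the test integral $\int_\Omega w\phi_1\,d\mu$ is positive) and the $C^1$ regularity up to $\partial\Omega$ to give meaning to the Robin-type boundary inequality satisfied by $w$.
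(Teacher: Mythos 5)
Your proof is correct and follows essentially the same route as the paper: test the strict differential inequality for $w$ against the positive principal eigenfunction $\varphi_1$, apply Green's second identity, observe that the Robin conditions make the boundary term nonnegative, and conclude $0>-\lambda_1\int_\Omega w\varphi_1\,d\mu$, hence $\lambda_1>0$. No gaps.
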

\begin{proof} Let $\lambda_1$ be the smallest eigenvalue of \eqref{e6f} and
let $\varphi_1$ be the corresponding eigenfunction. We have
\begin{eqnarray*}
0 &>& \int_\Omega \varphi_1 \{ \Delta w\, +\, f'(v)w\}\,
dV\,=\,\int_\Omega w\left\{ \Delta\varphi_1 +
f'(v)\varphi_1\right\}\, d\mu
 + \int_{\partial \Omega}\left\{\varphi_1\displaystyle{\frac{\partial w}{\partial \nu}}
-w\displaystyle{\frac{\partial \varphi_1}{\partial \nu}}
\right\}\,d\sigma\\
&\geq& - \lambda_1\,\int_\Omega w \varphi_1\,d\mu
+\alpha\int_{\partial \Omega}(w\varphi_1 - \varphi_1 w) \, dv \,
=\,- \lambda_1\,\int_\Omega w \varphi_1\,d\sigma.
\end{eqnarray*}
Therefore $\lambda_1>0$, thus the conclusion follows.
\end{proof}
\section{Surfaces of revolution in $\mathbb R^3$}\label{sectionS}
A {\em surface of revolution $S_\psi$ in } $\mathbb{R}^3$ is obtained by rotating around the $z$-axis a simple, regular plane curve $r \to (\psi(r),
\chi(r))$
 $(r \in I \equiv [r_1,r_2]; \, r_1<r_2)$ with $\psi>0$ in $(r_1,r_2)$. Therefore it
admits a parametrization of the form

\begin{equation}\label{e2f}
\left\{
\begin{array}{ll}
 \, x = \psi(r)\cos \theta
\\& \\
\textrm{ }y\,= \psi(r)\sin \theta,  \\&\\
 \textrm{ }z\, = \chi(r).
\end{array}
\right.\qquad \big((r,\theta)\in [r_1,r_2]\times[0,2\pi)\big)
\end{equation}
We can always assume that $(\psi')^2+(\chi')^2=1$ in $I$.
Moreover, we suppose that $\psi(r_1)>0, \psi(r_2)>0$, thus
\[\partial S_\psi\,=\, \big\{\big(\psi(r)\cos \theta, \psi(r)\sin \theta,\chi(r)\big)\,|\, r\in \{r_1, r_2\}, \theta\in [0, 2\pi)\big\}\,.\]

\smallskip

A surface of revolution $S_{\psi}$ in $\mathbb{R}^3$ (with parameterization \eqref{e2f})  is a $2$-dimensional Riemannian manifold with metric
\begin{equation*}
ds^2= d r^2 + \psi^2(r) d \theta^2 \, .
\end{equation*}
In the coordinates $(r,\theta)$ $(r \in (r_1,r_2), \theta\in (0, 2\pi))$ the Laplace-Beltrami operator  on $S_{\psi}$ is expressed as
\begin{align}\label{e3f}\Delta u \,=\, \frac{\partial^2 u}{\partial r^2}\,+\, \frac{\psi'}{\psi}\frac{\partial u}{\partial r}\,+\,
\frac 1{\psi^2}\frac{\partial^2 u}{\partial\theta^2}\,.
\end{align}
A direct calculation shows that the Ricci (Gaussian) curvature of $S_{\psi}$
is
\begin{equation}\label{e7f}
R(r)=-\frac{\psi''(r)}{\psi(r)}\qquad (r\in (r_1,r_2))\,.
\end{equation}
Observe that it does not depend on the direction $X$, nor on the angle $\theta$.  This is in accordance with the fact that on $2$-dimensional
surfaces the Ricci curvature is independent of the direction and coincides with the Gaussian curvature. Let us also point out for further references
that the quantity $\displaystyle{\frac{\psi'}{\psi} }$ represents the {\em geodesic curvature} $k_g$ of the parallel circles $r= constant$ on
$S_{\psi}$.
\smallskip

\bigskip\bigskip


\label{main}

\subsection{Instability} Let
$$
\Omega:= \{(\psi(r) \cos\theta, \psi(r) \sin\theta,\chi(r)) \,
|\,(r,\theta)\in [0,a]\times (0,2\pi]\}
$$
be an annular domain on a surface of revolution $S_\psi$ with
parametrization \eqref{e2f} $(r_1\le 0<a\le r_2)$. Note that
$\partial \Omega$ is made of the two geodesic circles:
\begin{eqnarray*}
C_0&:=& \{(\psi(0) \cos\theta, \psi(0) \sin\theta,\chi(0)) \,
|\,\theta\in  (0,2\pi]\}\,, \\
C_a&:=& \{(\psi(a) \cos\theta, \psi(a) \sin\theta,\chi(a)) \,
|\,\theta\in  (0,2\pi]\}\,.
\end{eqnarray*}
For the sake of simplicity we assume that $\chi'(0)>0, \chi'(a)>0$\,.

Let us start with a simple observation concerning non radial equilibrium solutions (see also \cite{BPT}, \cite{RW} for the case $\alpha=0$).
\begin{proposition}\label{prop1f} Every  equilibrium solution  $v$  of
problem \eqref{e1f}, which depends on the angle $\theta$, is
unstable.
\end{proposition}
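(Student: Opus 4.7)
The plan is to exploit the rotational symmetry of the domain $\Omega$ and of the metric to produce an explicit sign-changing element of the kernel of the linearized operator, which by Rayleigh's principle forces $\lambda_1<0$.

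First I would set $w:=v_\theta=\partial v/\partial\theta$. Since the metric $ds^2=dr^2+\psi^2(r)d\theta^2$ and the domain $\Omega$ are invariant under the isometries $\theta\mapsto\theta+c$, the Laplace--Beltrami operator commutes with $\partial_\theta$, and the boundary components $C_0,C_a$ are geodesic circles $r=const$ where the outer unit normal is $\pm\partial_r$, which also commutes with $\partial_\theta$. Differentiating \eqref{Dario0} (with $u$ replaced by $v$) in $\theta$ and differentiating the Robin condition in $\theta$ therefore gives
\begin{equation*}
\Delta w+f'(v)w=0 \tx{in} \Omega,\qquad \frac{\partial w}{\partial\nu}+\alpha w=0 \tx{on} \partial\Omega,
\end{equation*}
so $w$ is a solution of the linearized eigenvalue problem \eqref{e6f} corresponding to the eigenvalue $0$.

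Next I would verify that $w\not\equiv0$ and that $w$ changes sign in $\Omega$. Nontriviality is just the hypothesis that $v$ depends on $\theta$. For the sign change, I use periodicity: for every fixed $r\in(0,a)$ the function $\theta\mapsto v(r,\theta)$ is $2\pi$-periodic, hence
\[
\int_0^{2\pi} w(r,\theta)\,d\theta \;=\; v(r,2\pi)-v(r,0)\;=\;0,
\]
so either $w(r,\cdot)\equiv 0$ or it must change sign on that circle. Since $w\not\equiv 0$ somewhere, one finds points in $\Omega$ where $w>0$ and points where $w<0$.

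Finally I would conclude via the variational characterization \eqref{Dario5}. Using $\phi=w$ as test function yields $\lambda_1\le 0$. If by contradiction $\lambda_1=0$, then $w$ would be a minimizer of the Rayleigh quotient, hence (up to a constant) equal to the first eigenfunction $\varphi_1$ of \eqref{e6f}. But by standard elliptic theory $\varphi_1$ can be chosen strictly positive in $\Omega$, contradicting the fact that $w$ changes sign. Therefore $\lambda_1<0$ and $v$ is unstable. The only potential obstacle is the sign-change argument for $w$, but this is handled cleanly by the periodicity identity above; no curvature assumption or boundary calculation is needed.
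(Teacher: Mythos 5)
Your proposal is correct and follows essentially the same route as the paper: differentiate the equation (and the Robin condition) in $\theta$ to exhibit $v_\theta$ as a nontrivial eigenfunction of the linearized problem with eigenvalue $0$, observe that it changes sign by $2\pi$-periodicity, and conclude $\lambda_1<0$ since the principal eigenfunction is of one sign. The paper states the sign-change and the final step more tersely, but the content is identical.
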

\noindent {\em Proof.} By \eqref{e3f} $v$ is a solution of
\begin{align}\label{e8fa}
v_{rr} + \frac{\psi'}{\psi}v_r + \frac{1}{\psi^2} v_{\theta\theta}
+f(v)=0 \tx{in} (r_1,r_2)\times(0,2\pi).
\end{align}
If we  differentiate this equation with respect to $\theta$ we see that $v_\theta$ is an eigenfunction of \eqref{e6f} and that $\lambda=0$ is the
corresponding eigenvalue. The function $v_\theta$ changes sign and therefore it cannot be the eigenfunction corresponding to the lowest eigenvalue.
Hence $\lambda_1<0$, which establishes the assertion. \quad \quad $\square$
\medskip

From now on let $v(r)$ be a radial stationary solution. If we differentiate \eqref{e8fa} with respect to $r$ we get, setting $':=\frac{d}{dr}$,
$$
\Delta v' +\left(\frac{\psi'}{\psi}\right)'v'+f'(v)v'=0.
$$
Multiplication by $v'$ and integration over $\Omega$ yields
$$
-\int_\Omega (v'')^2\: dV + \int_\Omega f'(v)(v')^2\:dV +
\int_{C_a} v'v''\: ds - \int_{C_0} v'v''\: ds+
\int_\Omega\left(\frac{\psi'}{\psi}\right)'(v')^2\:dV=0.
$$
Hence
\begin{equation}\label{e25f}
\begin{split}
\lambda_1\int_\Omega(v')^2\: dV \leq
\int_{\Omega}\left(\frac{\psi'}{\psi}\right)'(v')^2\:dV + L_a
\{v'(a) v''(a)+\alpha[v'(a)]^2\} - L_0\{v'(0)v''(0) -\alpha
[v'(0)]^2\}\,, \hspace{1.5 cm}
\end{split}
\end{equation}
where $$L_0:= 2\pi \psi(0), \quad L_a:= 2\pi \psi(a)\,.$$

Note that for $p\in C_a,\, X\in T_p(C_a)$, one has $X=\gamma \frac{\partial}{\partial \theta}$ (for some $\gamma\in\mathbb R$) and thus (see, e.g.,
\cite{Pu1}) \[\sff(X,X)=-\gamma^2 \psi(a)\psi'(a)\,.\] Hence \begin{equation}\label{e101f} H\equiv H_a=-\frac{\psi'(a)}{\psi(a)}\,.\end{equation}
Similarly, for any $q\in C_0$ one has \begin{equation}\label{e102f} H\equiv H_0=\frac{\psi'(0)}{\psi(0)}\,.\end{equation} Thus, also using
\eqref{e8fa},
\begin{equation}\label{e27fa}
\begin{split}
&L_a \{v'(a) v''(a)+\alpha[v'(a)]^2\} - L_0\{v'(0)v''(0) -\alpha
[v'(0)]^2\}\\&\hspace{2cm}=L_a\big[H_a \alpha^2 v(a)^2 +\alpha
v(a) f(v(a))+\alpha^3 v(a)^2 \big]+L_0\big[H_0 \alpha^2 v(0)^2
+\alpha v(0) f(v(0))+\alpha^3 v(0)^2 \big]
\end{split}
\end{equation}
Therefore, we have the next result.
\begin{theorem}\label{thm1f}
Suppose that $v$ is a radial stationary solution of \eqref{e1f}.
If
\begin{equation}\label{e26f}
-\left (\frac{\psi'}{\psi} \right)' = -\frac{\psi''}{\psi} + \left
(\frac{\psi'}{\psi} \right )^2\geq0\, \quad \textrm{in}\;\;
(0,a)\,,
\end{equation}
and
\begin{equation}\label{e27f}
L_a\big[H_a \alpha^2 v(a)^2 +\alpha v(a) f(v(a))+\alpha^3 v(a)^2
\big]+L_0\big[H_0 \alpha^2 v(0)^2 +\alpha v(0) f(v(0))+\alpha^3
v(0)^2 \big]<0\,,
\end{equation}
then $v$ is unstable.
\end{theorem}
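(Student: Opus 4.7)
The strategy is to use the radial derivative $v'$ as a test function in the Rayleigh characterization \eqref{Dario5} of $\lambda_1$, and then to recognize that the two key inequalities \eqref{e25f} and \eqref{e27fa} have already been derived just above the statement. Under that observation the theorem is essentially an assembly result, and I would organize the proof so that \eqref{e26f} kills the interior term while \eqref{e27f} kills the boundary term.

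More precisely, the derivation of \eqref{e25f} proceeds as follows: differentiating the radial ODE $v'' + (\psi'/\psi)v' + f(v) = 0$ in $r$ yields $\Delta v' + (\psi'/\psi)' v' + f'(v)v' = 0$, using that for a radial function $\Delta v' = v''' + (\psi'/\psi)v''$ by \eqref{e3f}. Multiplying by $v'$, integrating over $\Omega$, and applying Green's identity gives
\[
\int_\Omega |\nabla v'|^2 \, dV - \int_\Omega f'(v)(v')^2 \, dV = \int_\Omega (\psi'/\psi)'(v')^2 \, dV + \oint_{\partial\Omega} v' \partial_\nu v' \, dS.
\]
Plugging $\phi = v'$ into \eqref{Dario5} and combining with the above produces \eqref{e25f}. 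The only subtlety here is the sign of the outward normal on the two circles: on $C_a$ one has $\partial_\nu = \partial_r$, whereas on $C_0$ one has $\partial_\nu = -\partial_r$ (since $\chi'(0) > 0$ and $\Omega$ lies on the $r \geq 0$ side), which accounts for the minus in front of the $L_0$ terms. The angular integrations in $dS$ simply produce the factors $L_a = 2\pi\psi(a)$ and $L_0 = 2\pi\psi(0)$.

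For the boundary identity \eqref{e27fa} I would substitute the Robin conditions $v'(a) = -\alpha v(a)$ and $v'(0) = \alpha v(0)$, eliminate $v''(0)$ and $v''(a)$ via the ODE $v'' = -(\psi'/\psi)v' - f(v)$, and replace $-\psi'(a)/\psi(a)$ and $\psi'(0)/\psi(0)$ by $H_a$ and $H_0$ using \eqref{e101f}--\eqref{e102f}. A short algebraic manipulation yields exactly the right-hand side of \eqref{e27fa}.

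Combining \eqref{e25f} and \eqref{e27fa} gives
\[
\lambda_1 \int_\Omega (v')^2 \, dV \leq \int_\Omega (\psi'/\psi)'(v')^2 \, dV + \bigl\{\text{the boundary quantity in \eqref{e27f}}\bigr\}.
\]
Hypothesis \eqref{e26f} forces $(\psi'/\psi)' \leq 0$ on $(0,a)$, so the interior integral is non-positive, and \eqref{e27f} makes the boundary piece strictly negative. The strict inequality \eqref{e27f} in particular excludes $v \equiv 0$, so $v$ is non-constant, $v' \not\equiv 0$, and $\int_\Omega (v')^2 \, dV > 0$. Dividing yields $\lambda_1 < 0$, whence $v$ is unstable. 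I do not expect a genuine obstacle: every ingredient is already displayed in the excerpt, and the only point requiring care is the sign bookkeeping of the outward normal on $C_0$ and $C_a$ when converting $\partial_\nu v'$ and the Robin condition into explicit expressions at $r = 0$ and $r = a$.
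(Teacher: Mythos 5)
Your proposal is correct and follows essentially the same route as the paper: the paper derives \eqref{e25f} by differentiating the radial ODE, testing with $v'$ in \eqref{Dario5}, and integrating by parts, then converts the boundary terms into \eqref{e27fa} via the Robin conditions and \eqref{e101f}--\eqref{e102f}, exactly as you describe. Your sign bookkeeping on $C_0$ and $C_a$ and the observation that \eqref{e27f} rules out $v'\equiv 0$ are both consistent with the paper's argument.
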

The assumption \eqref{e26f} has a geometrical meaning
in the sense that

$$
  \left (\frac{\psi'}{\psi  } \right)'\!\!(r)=-R(r)-[\kappa_g(r)]^2 \qquad (r \in [r_1,r_2]),
$$
where $\kappa_g(r)$ is the geodesic curvature of the circles $r=$const.

\subsection{Existence of stable solutions (patterns)}
If the variational problem
$$
\min_{v\in W^{1,2}(\Omega)} \mathcal{E}(v,\Omega)
$$
is solvable then the minimizer is stable. Hence for positive $\alpha$ and large classes of nonlinearities this is often the case. For Neumann and
Robin boundary conditions with negative $\alpha$  the minimum does in general not exist.

In this section we construct on surfaces for which condition \eqref{e26f} is violated a problem with negative $\alpha$  possessing a stable solutions
satisfying the boundary condition \eqref{e27f}.
\begin{theorem}\label{thm2f}
If, for some $\hat R \in (0,a)$,
$$
\left (\frac{\psi'}{\psi} \right)' (\hat R) > 0\,,
$$
then there exists $f \in C^1(\mathbb R), \alpha<0$ such that problem \eqref{e1f} admits a stationary asymptotically stable solution which satisfies
\eqref{e27f}.
\end{theorem}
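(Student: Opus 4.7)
The plan, in the spirit of the constructions in \cite{BPT} and \cite{Yan}, is to \emph{reverse engineer} a radial solution: prescribe a profile $v(r)$ with a tailored shape near $\hat R$, then define the nonlinearity $f$ through the radial ODE, pick $\alpha<0$ compatible with the Robin data, and finally check stability via the Barta-type criterion of Lemma \ref{lemma1f}. By continuity, fix $\delta>0$ small enough that $J:=[\hat R-\delta,\hat R+\delta]\subset(0,a)$ and
\[
\bigl(\psi'/\psi\bigr)'(r)\ \ge\ c\ >\ 0\qquad\text{on } J.
\]

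First, construct $v\in C^\infty([0,a])$ strictly increasing, with $v'$ essentially concentrated in $J$ (up to exponentially small tails at the endpoints), $v(0)<0<v(a)$, and the positive values $v'(0),v'(a)$ subject to the single compatibility condition $v'(0)/v(0)=-v'(a)/v(a)$. Denote by $\alpha<0$ the common value: this automatically enforces $\partial_\nu v+\alpha v=0$ on both $C_0$ and $C_a$. Since $v:[0,a]\to[v(0),v(a)]$ is a diffeomorphism, set
\[
f_0\bigl(v(r)\bigr)\ :=\ -v''(r)-\frac{\psi'(r)}{\psi(r)}v'(r),\qquad r\in[0,a],
\]
and extend $f_0$ to $f\in C^1(\mathbb R)$ arbitrarily. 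By construction $v$ is a radial stationary solution of \eqref{e1f}.

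For stability, decompose perturbations by Fourier series in $\theta$: the modes with $k\ne 0$ pick up a strictly positive extra term $k^2/\psi^2$ in the Rayleigh quotient \eqref{Dario5}, so $\lambda_1$ is attained on the radial mode and equals the first eigenvalue of the corresponding one-dimensional Robin problem on $(0,a)$. Apply Lemma \ref{lemma1f} with test function $w=v'>0$: differentiating the radial equation $v''+(\psi'/\psi)v'+f(v)=0$ yields
\[
\Delta w+f'(v)\,w\ =\ -\Bigl(\frac{\psi'}{\psi}\Bigr)'(r)\,w,
\]
which is strictly negative on $J$. On the complementary region $[0,a]\setminus J$, where $v'$ is taken exponentially small, a small positive perturbation of $w$ (by a suitable explicit function) restores the strict interior inequality and, through a tuning of $v''(0)$ and $v''(a)$, gives the boundary inequality $\partial_\nu w+\alpha w\ge 0$. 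Thus $\lambda_1>0$, which is asymptotic stability.

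It remains to verify \eqref{e27f}. With $\alpha<0$ fixed and $v(0),v(a),f(v(0)),f(v(a))$ at our disposal, the two boundary contributions on $C_0$ and $C_a$ reduce to an elementary algebraic inequality that can be ensured by a last fine tuning of the endpoint values of $v$. The real obstacle sits in the stability step: since $v'$ must remain strictly positive on the whole of $[0,a]$ in order for the diffeomorphism argument defining $f$ to apply, one cannot assume $v'$ vanishes outside $J$, and the sign of $\Delta w+f'(v)w$ is only directly controlled on $J$. The delicate part of the proof is therefore the construction of a small positive correction to $w=v'$ that restores strict negativity throughout $[0,a]$ while preserving the Robin boundary inequality and not destroying the boundary condition \eqref{e27f}.
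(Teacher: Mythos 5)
Your overall strategy is the same as the paper's (reverse--engineer a radial profile, read off $f$ from the radial ODE, and certify stability with the Barta--type Lemma \ref{lemma1f} applied to a perturbation of $v'$), but the proposal stops exactly at the step that carries all the difficulty, and you say so yourself. The problem is that outside $J$ the identity $\Delta w+f'(v)w=-(\psi'/\psi)'\,w$ with $w=v'$ has no favourable sign, and whether a ``small positive correction'' can repair it depends entirely on the size of $f'(v)$ there: to make $\Delta\phi+f'(v)\phi$ strongly negative for a positive corrector $\phi$ you need $f'(v)$ to be very negative on $[0,a]\setminus J$, and $f'(v)$ is not a free parameter --- it is determined by the tail profile of $v'$ through $f'(v)=\bigl[-v'''-(\psi'/\psi)v''-(\psi'/\psi)'v'\bigr]/v'$. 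An unspecified ``exponentially small tail'' gives no control on this quantity. The paper resolves precisely this point by \emph{choosing} the tails of $z=Z'$ as solutions of the linear Cauchy problems \eqref{e9f}, \eqref{e11f} with a large parameter $B$, which forces $f'(Z)\equiv -B$ on the outer intervals (see \eqref{e16f}), and then uses the explicit polynomial correctors in \eqref{e1fa} whose negativity under $\Delta+f'(Z)$ is obtained by taking $B$ large and $m_1,m_2$ small. Without an analogous quantitative choice, the assertion that a corrector exists is not justified, so the proof is incomplete at its central step.

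A second, related gap is the claim that \eqref{e27f} follows by ``fine tuning of the endpoint values.'' Once you insist that $v'>0$ on all of $[0,a]$ (needed for your diffeomorphism definition of $f$), the Robin conditions force $\alpha=v'(0)/v(0)=-v'(a)/v(a)$, so $\alpha$, $v(0)$, $v(a)$, $v'(0)$, $v'(a)$ are coupled; by \eqref{e27fa} condition \eqref{e27f} then becomes a constraint on $v''(0),v''(a)$ which must be checked to be compatible with the Barta boundary inequalities for the \emph{corrected} $w$ (note that if one took $w=v'$ itself, the Barta boundary inequalities would force both bracketed terms in \eqref{e27fa} to be nonnegative, contradicting \eqref{e27f}; so the corrector is indispensable at the boundary too, and its endpoint data enter the compatibility check). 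The paper sidesteps this tension by taking $z(0)=0$, $Z(0)=0$, so that the inner Robin condition is automatic and \eqref{e27f} reduces to the single inequality $L_a\beta(\alpha\beta-1)<0$. Your scheme can likely be completed, but as written both the global Barta inequality and the consistency of the endpoint constraints are asserted rather than proved.
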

In  the proof we follow the arguments used in \cite{BPT} for the
case $\alpha=0$ (see also \cite{Yan} where a different
differential operator is treated). Several modifications are
needed to adapt those proofs to our problem; they are summarized
in Remark \ref{ossdiff}.

Take $ R_0 < R_1 <R_2 < R_3$  in a neighborhood of $\hat R$. Since
$\psi \in C^2(I)$, we can choose $R_0$ and $R_3$ such that
\begin{equation}\label{e8f}
\left(\frac{\psi'}{\psi}\right)'\!\!(s)>0\quad\textrm{for any}\;\;
s\in [R_0, R_3]\,.
\end{equation}
Let $z_1=z_1(s)$ be the solution of the Cauchy problem
\begin{equation}\label{e9f}
\left\{
\begin{array}{ll}
\displaystyle{ \left[\frac {(\psi z)'}{\psi} \right]'\,- B z\,=
\,0 }
 & \textrm{in}\,\;\; [0, R_1)
\\& \\
\textrm{ }z(0)=0\,, \quad z'(0)=1\,,
\end{array}
\right.
\end{equation}
where
\begin{equation}\label{e10f}
B>\overline B:= \max_{[0,a]
}\left|\left(\frac{\psi'}{\psi}\right)'\right|\,.
\end{equation}
Similarly for any $\beta>0$, let $z_2=z_2(s)$ be the solution of
the Cauchy problem
\begin{equation}\label{e11f}
\left\{
\begin{array}{ll}
\displaystyle{ \left[\frac {(\psi z)'}{\psi} \right]'\,- B z\,=
\,0 }
 & \textrm{in}\,\;\; (R_2, a]
\\& \\
\textrm{ }z(a)=\beta\,,\quad z'(a)=-1.
\end{array}
\right.
\end{equation}
If  necessary we shall write $z_1=z_1(s,B)$, $z_2=z_2(s,B, \beta)$
to stress the dependence of the solution on the parameters $B$ and
$\beta$.

\begin{lemma}\label{lemma2f} The solution $z_1$ of problem \eqref{e9f} has the following properties:

\noindent $(i)$  $z_1>0$ in $(0, R_1)$;

\noindent  $(ii)$ $z_1(\cdot,B)$ is increasing in $[0, R_1 )$ for
any $B>\overline B $;

\noindent  $(iii)$ $z_1(r,\cdot)$ is increasing on $(\overline B ,\infty)$ for any $r$ in $(0, R_1)$;
$$
\lim_{B\to \infty} z_1(r,B)=\infty \quad\hbox{for any }\, r \in (0, R_1)\,. \leqno(iv)
$$

\smallskip

Similarly, for the solution $z_2$ of problem \eqref{e10f} the following hold:

\noindent  $(i')$  $z_2>\beta$ in $(R_2, a)$;

\noindent  $(ii')$ $z_2(\cdot,B)$ is decreasing in $(R_2, a)$ for any $B>\overline B$;

\noindent  $(iii')$ $z_2(r,\cdot)$ is increasing on $(\overline B ,\infty)$ for any $r \in (R_2, a)$;
$$
\lim_{B\to \infty} z_2(r,B)=\infty \quad\hbox{for any }\, r \in (R_2,a)\,. \leqno(iv')
$$

\end{lemma}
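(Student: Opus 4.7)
The plan is to recast the Cauchy problem \eqref{e9f} in non-divergence form as
\begin{equation*}
z'' + \frac{\psi'}{\psi}z' \,=\, \left[B - \left(\frac{\psi'}{\psi}\right)'\right] z,
\end{equation*}
noting that by \eqref{e10f} the coefficient in square brackets is strictly positive on $[0,a]$. A single integration of the original divergence form between $0$ and $r$, using $(\psi z_1)'(0)/\psi(0)=z_1'(0)=1$, yields the basic identity
\begin{equation*}
\frac{(\psi z_1)'(r)}{\psi(r)} \,=\, 1 + B\int_0^r z_1(s)\,ds.
\end{equation*}
For (i), a first-return argument applies: if $\tau\in(0,R_1)$ were the smallest zero of $z_1$, then $z_1>0$ on $(0,\tau)$ would force $(\psi z_1)'>0$ on $[0,\tau]$, so $\psi z_1$ would be strictly increasing, contradicting $z_1(0)=z_1(\tau)=0$. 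For (ii), at a would-be first zero $\rho$ of $z_1'$ one would have $z_1'(\rho)=0$ and $z_1''(\rho)\le0$, while the ODE gives $z_1''(\rho)=[B-(\psi'/\psi)'(\rho)]\,z_1(\rho)>0$, a contradiction.

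For (iii), I would differentiate with respect to the parameter $B$. Setting $w=\partial_B z_1$, one obtains $[(\psi w)'/\psi]'-Bw=z_1>0$ in $(0,R_1)$ with $w(0)=w'(0)=0$. Differentiating the ODE at $0$ gives $w''(0)=0$ and $w'''(0)=z_1'(0)=1$, so $w(r)>0$ for $r>0$ small. A first-return argument based on the integrated identity $(\psi w)'(r)/\psi(r) = \int_0^r[Bw+z_1]\,ds$ then rules out any subsequent zero of $w$ in $(0,R_1)$, giving $\partial_B z_1>0$.

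For (iv), I would apply the Liouville substitution $z_1 = \psi^{-1/2}u$, which removes the first-order term and reduces the equation to
\begin{equation*}
u'' \,=\, \bigl[B-q(r)\bigr]\,u,\qquad u(0)=0,\quad u'(0)=\sqrt{\psi(0)},
\end{equation*}
where $q(r)=\frac{\psi''(r)}{2\psi(r)}-\frac{3(\psi'(r))^2}{4\psi(r)^2}$ is bounded and $B$-independent on $[0,R_1]$. Setting $C=\sup_{[0,R_1]}q$ and $\phi(r)=\sqrt{\psi(0)}\,\sinh(\sqrt{B-C}\,r)/\sqrt{B-C}$, one has $u=\psi^{1/2}z_1>0$ on $(0,R_1)$ by (i) and $\phi>0$ on $(0,R_1]$ for $B>C$. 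The Wronskian $W=u'\phi-u\phi'$ satisfies $W(0)=0$ and $W'=(C-q)u\phi\ge0$, whence $u/\phi$ is non-decreasing; since $u/\phi\to 1$ as $r\to 0^+$ by L'H\^opital, we obtain $u(r)\ge\phi(r)$. The right-hand side tends to $+\infty$ as $B\to\infty$ for any fixed $r\in(0,R_1)$, and division by $\sqrt{\psi}$ gives (iv).

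The properties (i$'$)--(iv$'$) of $z_2$ follow by the completely analogous scheme, integrating \eqref{e11f} backward from $r=a$: the bound $z_2>\beta$ replaces $z_1>0$ because the terminal value is $\beta$ rather than $0$, while the sign change $z_2'(a)=-1$ yields decrease rather than increase. The main technical subtlety lies in (iii) and (iii$'$): since both initial data of $w=\partial_B z_1$ vanish, no maximum principle can be triggered directly at the initial point, so one must combine the third-order Taylor expansion at the origin with the integrated first-return identity to start off the positivity of $w$. Everything else reduces to standard linear-ODE Sturm analysis.
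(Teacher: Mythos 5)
Your proposal is correct; all four properties (and their primed analogues) are established by valid arguments. It does, however, depart from the paper's proof in two substantive places, so a comparison is in order. First, the paper proves only the $z_2$ half of the lemma, referring to \cite{BPT} for the $z_1$ statements; you prove both halves, and for $(i)$, $(ii)$ your first-return/interior-extremum arguments are essentially the ones the paper runs for $(i')$, $(ii')$. Second, for the monotonicity in $B$ the paper does not differentiate in the parameter: it takes two solutions $z_2(\cdot,B_1)$, $z_2(\cdot,B_2)$ with $B_1<B_2$, subtracts, and derives a strict differential inequality for the difference with vanishing Cauchy data at $r=a$, concluding by a comparison argument; your route via $w=\partial_B z$, which solves the inhomogeneous variational equation with right-hand side $z>0$, is equally valid and in fact makes fully explicit the step the paper dismisses as ``easily seen'', at the price of invoking smooth dependence on the parameter and the third-order Taylor expansion at the origin needed to start the positivity of $w$ (note that at $r=a$ only second order is needed, since $w''(a)=z_2(a)=\beta>0$). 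Third, for $(iv)$/$(iv')$ the paper integrates the equation twice to obtain the representation \eqref{e103f} and then uses $(iii')$ to bound the double integral below by a $B$-independent positive quantity, so that the explicit factor $B$ forces blow-up; your Liouville substitution $z=\psi^{-1/2}u$ followed by a Wronskian comparison with $\sinh\bigl(\sqrt{B-C}\,r\bigr)$ is a clean alternative that does not even use $(iii)$ and yields an explicit exponential lower bound. The one thing the paper's route buys that yours does not is the identity \eqref{e103f} itself, which is reused later in the proof of Theorem \ref{thm2f} to show $\int_0^a z(r;B)\,dr\geq C_0B$. Finally, when you transfer the first-return argument to $(i')$ you should work with $v=z_2-\beta$ (which satisfies $\bigl[(\psi v)'/\psi\bigr]'-Bv=\beta\bigl[B-(\psi'/\psi)'\bigr]\geq 0$ and $(\psi v)'(a)/\psi(a)=-1$), since the integrated identity for $z_2$ itself carries the extra term $\beta\psi'(a)/\psi(a)$ of ambiguous sign; this is a small but necessary adjustment to your ``completely analogous scheme''.
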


\noindent {\it Proof.} The statements concerning $z_1$ have been
shown in \cite{BPT}. Let us show those concerning $z_2$.

\noindent  $(i')$ Assume that there exists $\tilde r\in (R_2, a)$
such that
\[
z_2(\tilde r)=\beta,\;\; z_2(s)>\beta\quad\textrm{for any}\;\;
s\in (\tilde r,a)\,.
\]
Then for some $\bar r\in (\tilde r, a)$ we have
$$
z_2(\bar r)=\max_{[\tilde r, a]}z_2 >\beta\,, \, z_2'(\bar r)=0,\:\, z_1''(\bar r)\leq 0.
$$
So,
$$
 \left[\frac {(\psi z_2)'}{\psi} \right]'\!\!(\bar r)\,- B z_2(\bar r)\,=\,z_2''(\bar r)+\frac{\psi'}{\psi}z_2'(\bar r)+ \left[\left(\frac{\psi'}{\psi}\right)'-
 B \right]z_2(\bar r)<0\,.
$$
This contradicts the definition of $z_2$, hence the  claim
follows.

\smallskip

\noindent  $(ii')$ Suppose on the contrary that there exist $\hat r\in (R_2, a)$ such that
\begin{equation}\label{e12f}
z_2'(r)<0\quad\textrm{for any}\;\, r\in (\hat r,
a)\,,\quad\,z_2'(\hat r)=0 \quad \Rightarrow \quad z_2''(\hat
r)\le0\,.
\end{equation}
On the other hand, we have
$$
\,z_2''(\hat r)= - \frac{\psi'}{\psi}z_2'(\hat r)-
\left[\left(\frac{\psi'}{\psi}\right)'- B \right]z_2(\hat r)>0
$$
since  by $(i)$ $z_2(\hat r)>0$. This is a contradiction, thus $z_2$ is increasing in $(R_2, a)\,.$
\smallskip

\noindent  $(iii')$ Let $B_2>B_1> \overline B$. Set $\zeta_1(r):=z_2(r; B_1), \zeta_2(r):=z_2(r; B_2)$. Then
$w(r):=\zeta_1-\zeta_2$ solves
\[
\left\{
\begin{array}{ll}
\left(\frac{(\psi w)'}{\psi}\right)' \,= B_1 \zeta_1 - B_2
\zeta_2< B_2 \zeta_1 - B_2 \zeta_2 = B_2 w \,
 & \textrm{in}\,\;\; [R_2, a)
\\& \\
\textrm{ } w(a)=0\,, \quad w'(a)=0\,.
\end{array}
\right.
\]
Therefore, $w$ satisfies
\[
\left\{
\begin{array}{ll}
w'' + \frac{\psi'}{\psi} w' + \left\{\left(\frac{\psi'}{\psi}\right)' - B_2 \right\}w < \,0
 & \textrm{in}\,\;\; [R_2, a)
\\& \\
\textrm{ } w(a)=0\,, \quad w'(a)=0\,.
\end{array}
\right.
\]
Hence, it is easily seen that $w < 0$ in $[R_2, a)$, so
\[z_2(r, B_1)\leq z_2(r, B_2)\quad \textrm{for any}\;\; r\in [R_2, a)\,,\]
thus the claim follows.

\smallskip

\noindent  $(iv')$ Fix any $B_1>\overline B $. Integrating the
differential equation in \eqref{e11f} and using $(ii')$ we get for
any $r\in [R_2,a)$ and $B \ge B_1$:
\begin{equation}\label{e103f}
\begin{aligned}
z_2(r,B)&=\frac 1{\psi(r)}\left\{B \int_r^a\psi(\tau)\int_\tau^{a}
z_2(t,B)dt d\tau
+\beta\psi(a)+\left[1+\frac{\psi'(a)}{\psi(a)}\beta\right]\int_r^a\psi(\tau)d\tau \right\}  \ge \\
 &\ge \frac 1 {\psi(r)}\left\{B\int_r^a\psi(\tau
)\int_\tau^{a}z_2(t,B_1) dt d\tau +\left[1+\frac{\psi'(a)}{\psi(a)}\beta\right]\int_r^a\psi(\tau)d\tau\right\}\,.
\end{aligned}
\end{equation}
The claim follows by letting $B\to \infty$.
\hfill$\square$

\bigskip

Define
\begin{equation}\label{e13f}
z(r):=\left\{
\begin{array}{ll}
\textrm{ } z_1(r) & \;\;\hbox{if }  r\in [0,R_1
)\,,\\& \\
\textrm{ }z_3(r) & \;\; \hbox{if }  r\in [R_1 ,R_2
] \,,\\&\\
\textrm{ }z_2(r) & \;\; \hbox{if }  r\in (R_2 ,a] \,;
\end{array}
\right.
\end{equation}
here $z_3$ is any positive smooth function such that $z$ is smooth
at the points $r=R_1$, $r=R_2 $. By its definition and Lemma
\ref{lemma1f}-$(i)$, the function $z$  is smooth in $[0,a] $ and
\begin{equation}\label{e14f}
z>0\quad\textrm{in}\;\; (0,a)\,, \qquad z(0)=0,\; z(a)=\beta\,.
\end{equation}
Clearly, $z$ depends on the choice of the parameter $\beta$; to
highlight this we write $z=z_\beta$, if it is needed.

\begin{lemma}\label{lemma3f}
Let $\beta>0,$ let the function $z= z_\beta$ be defined by
\eqref{e13f}. Then there exists $f\in C^1(\mathbb{R})$ such that
the function
\begin{equation}\label{e15f}
Z(r):= \int_0^r z(s)\, ds \qquad (r\in [0,a])
\end{equation}
is a stationary solution of problem \eqref{e1f}, which satisfies
\eqref{e27f}, provided
$$\alpha=-\frac{\beta}{\int_0^az(r) dr} \,.$$
\end{lemma}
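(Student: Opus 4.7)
My plan is as follows. By Lemma \ref{lemma2f}, $z>0$ on $(0,a)$ with $z(0)=0$ and $z(a)=\beta$, so $Z$ is strictly increasing from $Z(0)=0$ to $Z(a)>0$; in particular $Z:[0,a]\to[0,Z(a)]$ is a $C^\infty$ bijection and $Z^{-1}$ is smooth on $(0,Z(a))$. Since $Z$ depends only on $r$, formula \eqref{e3f} gives $\Delta Z = z' + \frac{\psi'}{\psi}z = \frac{(\psi z)'}{\psi}$. Thus it is natural to \emph{define} $f$ on $[0,Z(a)]$ by
\begin{equation*}
f(s) := -\frac{(\psi z)'}{\psi}\bigl(Z^{-1}(s)\bigr),
\end{equation*}
so that $\Delta Z(r)+f(Z(r))=0$ in $\Omega$ by construction.

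Next I would obtain explicit formulas for $f$ near the endpoints that rule out any regularity issue at $s=0$ and $s=Z(a)$. Integrating the ODE in \eqref{e9f} from $0$ to $r\in[0,R_1)$ and using $(\psi z_1)'(0)/\psi(0) = 1$, one finds $\frac{(\psi z)'(r)}{\psi(r)} = 1 + BZ(r)$, hence $f(s)=-1-Bs$ on $[0,Z(R_1))$. Similarly, integrating \eqref{e11f} backwards from $a$ and using $(\psi z_2)'(a)/\psi(a) = -1 - H_a\beta$ (where $H_a=-\psi'(a)/\psi(a)$ by \eqref{e101f}) yields $f(s)=1+H_a\beta+B\bigl(Z(a)-s\bigr)$ on $(Z(R_2),Z(a)]$. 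Both pieces are linear, hence smooth across the endpoints; on the interior interval $[Z(R_1),Z(R_2)]$ the function $f=-\frac{(\psi z_3)'}{\psi}\circ Z^{-1}$ is smooth because $z_3$ is smooth with $Z'>0$ there. Finally I extend $f$ to all of $\mathbb{R}$ by prolonging the two linear pieces, which produces a $C^1$ (indeed $C^\infty$) function on $\mathbb{R}$.

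The boundary conditions are then straightforward. At $r=0$ the outer unit normal is $-\partial_r$, so $\partial_\nu Z(0)+\alpha Z(0) = -z(0)+\alpha\cdot 0 = 0$ holds automatically. At $r=a$ the outer unit normal is $\partial_r$, so $\partial_\nu Z(a)+\alpha Z(a) = \beta + \alpha Z(a) = 0$ precisely when $\alpha = -\beta/Z(a) = -\beta/\int_0^a z(r)\,dr$, as prescribed.

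It remains to verify \eqref{e27f}. Since $v(0)=Z(0)=0$, the $L_0$ bracket vanishes identically. Using $\alpha Z(a)=-\beta$ (so $\alpha^2 Z(a)^2=\beta^2$ and $\alpha^3 Z(a)^2 = -\beta^3/Z(a)$) together with $f(Z(a)) = -(\psi z)'(a)/\psi(a) = 1+H_a\beta$, I compute
\begin{equation*}
H_a\alpha^2 Z(a)^2 + \alpha Z(a) f(Z(a)) + \alpha^3 Z(a)^2 = H_a\beta^2 - \beta(1+H_a\beta) - \frac{\beta^3}{Z(a)} = -\beta\Bigl(1+\frac{\beta^2}{Z(a)}\Bigr),
\end{equation*}
which is strictly negative since $\beta,Z(a)>0$. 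Multiplying by $L_a>0$ yields \eqref{e27f}, finishing the proof. The only real subtlety is the $C^1$ extension of $f$, but the cancellation that makes $f$ linear on a neighborhood of each endpoint $s=0$ and $s=Z(a)$ (an immediate consequence of the specific ODE chosen in \eqref{e9f} and \eqref{e11f}) removes that potential obstacle.
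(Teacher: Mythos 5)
Your proof is correct and follows essentially the same route as the paper: you define $f$ by inverting the relation $\Delta Z+f(Z)=0$, obtain its linearity (hence $C^1$ regularity) near $u=0$ and $u=Z(a)$ by integrating the Cauchy problems \eqref{e9f} and \eqref{e11f}, extend linearly, and read off the Robin conditions from the choice of $\alpha$. Your explicit computation reducing the $L_a$ bracket in \eqref{e27f} to $-\beta\bigl(1+\beta^2/Z(a)\bigr)<0$ is a welcome detail at a point where the paper merely asserts the inequality ``is easily seen.''
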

\noindent{\it Proof}. Since $z>0$ in $(0,a)$, 
the function $u=Z(r)$ is increasing in $(0,a)$. Denote by
$r=Z^{-1}(u)$ the inverse function,  then define
\begin{equation}\label{e16f}
f(u):=\left\{\begin{array}{ll} - B u -1 & \;\; \hbox{if }  u\le
0\,,
\\& \\
\textrm{ } \displaystyle{- \frac{d\big\{\psi[Z^{-1} (u)]z[Z^{-1}(u)]\big\}/du}{\psi[Z^{-1}(u)]\frac{d\left[\left (Z^{-1}\right)(u)\right]}{du}}}
& \;\;\hbox{if }  0<u<Z(a)\,, \\&\\
\textrm{ } - B u + B Z (a) + 1 - \beta\frac{\psi'(a)}{\psi(a)} &
\;\; \hbox{if } u\ge Z(a)\,.
\end{array}
\right.
\end{equation}
In order to guarantee that $f\in C^1(\mathbb R)$ we have to prove
that $f$ is smooth  at $u=0$ and $u=Z(a)$. The smoothness at
$u=Z(a)$ will follow, if we can show that
\begin{equation}\label{e17f}
f(u)=- B u + B Z(a) + 1 - \beta\frac{\psi'(a)}{\psi(a)}\quad\textrm{for any}\;\; u\in (Z(R_2), Z(a)]\,.
\end{equation}
For that purpose, let us integrate the differential equation in
\eqref{e11f} on $(r, a)$ for any fixed $r \in (R_2,a)$. We obtain
\begin{equation}\label{e18f}
\frac {(\psi z)'}{\psi}(r)= B[Z(r)- Z(a)]  + z'(a)
+\frac{\psi'(a)}{\psi(a)}z(a) \quad \hbox{for any } r \in (R_2, a)
\,.
\end{equation}
On the other hand, it is easily seen that
\begin{equation}\label{e19f}
f[Z(r)] = -\frac {\left(\psi Z'\right)'}{\psi}(r)=-\frac
{\left(\psi z\right)'}{\psi}(r)
\end{equation}
for any $ r \in (0,a)$. Therefore, by  \eqref{e18f}- \eqref{e19f}
we have
\begin{equation}\label{e20f}
f[Z(r)]= - B Z(r) + B Z (a) + 1 -
\beta\frac{\psi'(a)}{\psi(a)}\quad \textrm{for any}\;\; r\in (R_2,
a)\,.
\end{equation}
Since $Z$ is increasing, \eqref{e20f} holds for $u=Z(r)$ in $(Z(R_2), Z(a)]$.
Similarly it is seen that
\begin{equation}\label{e21f}
f(u)= - B u - 1\quad\textrm{for any}\;\;u\in [0, Z(R_1))\,,
\end{equation}
which implies the smoothness of $f$ at $u=0$. Hence $f\in
C^1(\mathbb{R})$.

Observe that due to \eqref{e14f} and our choice of $\alpha$ we
have
$$
Z'(0) + \alpha Z(0) = z(0) = 0, \quad Z'(a) + \alpha Z(a) = \beta
+ \alpha \int_0^a z(r) dr\,=\,0.
$$
Note that by \eqref{e19f}, $Z$ solves the differential equation in \eqref{e1f}. Therefore, $Z$ is a stationary solution to problem \eqref{e1f}.
Moreover, due to \eqref{e27fa} it is easily seen that $Z\not\equiv 0$ satisfies \eqref{e27f}. Then the conclusion follows. \hfill $\square$

\smallskip

Now we are in position to prove Theorem \ref{thm2f}.

\smallskip

\noindent{\em Proof of Theorem \ref{thm2f}}.
 Let $Z
$ be the stationary non constant solution of problem \eqref{e1f}
with  the function $f$ defined by \eqref{e16f} of Lemma
\ref{lemma3f}. Define
\begin{equation}\label{e1fa}
w(r):=\left\{
\begin{array}{ll}
 z(r)- m_1 z(R_0) (r-R_1
)^{3l}
 & \;\; \hbox{if }  r\in [0,R_1
)\,,\\& \\
\textrm{ } z(r) & \;\; \hbox{if }  r\in [R_1 ,R_2
]\,,\\&\\
\textrm{ } z(r)+ m_2 z(R_3) (r-R_2 )^{3l} & \;\; \hbox{if }  r\in
(R_2 ,a]\,,
\end{array}
\right.
\end{equation}
with constants  $m_1\in (0,\infty), m_2\in (0,\infty), l\in
\mathbb N, l$ odd, that will be chosen later.
Observe that $w>0$ in $[0,a]$. Furthermore, recall that $z$, and hence $w$, depend on the parameter $B$ in problems \eqref{e9f} and \eqref{e11f}. 

\smallskip

Without loss of generality, we can suppose $ \chi'>0$ in $(0,a)$
(see \eqref{e2f}). Therefore,
$$
\frac{\partial w}{\partial \nu}(0)=-w'(0)\,, \quad  \frac{\partial
w}{\partial \nu}(a)=w'(a) \,.
$$
Next we shall prove the following
\medskip

\noindent {\em Claim:} There exist $m_1>0, m_2>0, l\in \mathbb N,
l$ odd and $B>0$ satisfying \eqref{e10f} such that
\begin{equation}\label{e28f}
\left\{
\begin{array}{ll}
\displaystyle{ \frac{(\psi w')'}{\psi} }\,+\, f'(Z)w \, <\, 0
 & \textrm{in} \;\; (0,a)
\\& \\
\textrm{ } w'(0) - \alpha w(0)<0\,,\;\; w'(a)+\alpha w(a)>0\,.
\end{array}
\right.
\end{equation}

In order to establish the first inequality in \eqref{e28f}, we
think of the interval $(0,a)$ as the disjoint union $(0,a)=(0,R_1)
\cup[R_1,R_2]\cup(R_2,a)$. Recall that by definition $z=z_1$ in
$(0,R_1)$ and $z=z_2$ in $(R_2,a)$. Observe that for any $ r \in
(0,R_1)\cup (R_2,a)$ we have by \eqref{e17f} and \eqref{e20f}
$$
f'(Z(r))= -B
$$
and by \eqref{e9f} and \eqref{e11f}
$$
 \frac{(\psi z')'}{\psi} \,-\, Bz = - \left(\frac{\psi'}{\psi} \right)' \!z.
$$

\smallskip

This together with the definition of $w$ yields
for any $r\in (0, R_1)$
\begin{equation}\label{e29f}
\begin{aligned}
\frac{(\psi w')'}{\psi} \,+\, f'(Z)w  & =  \frac{(\psi w')'}{\psi}
\,- Bw = -\left[ \left(\frac{\psi'}{\psi}\right)' z \right](r)\\
& +m_1z(R_0)(R_1-r)^{3l-2}\left[3l(3l-1)+3l\left(\frac{\psi'}{\psi}\right)(r)(r-R_1)-B(r-R_1)^2\right]\,.
\end{aligned}
\end{equation}
Let us prove that the right-hand side of the above expression is
negative in $(0,R_1) =(0,R_0)\cup[R_0,R_1)$. For this purpose
observe that:
\begin{itemize}
\item in $(0,R_0)$ there holds
$$-\left[\left(\frac{\psi'}{\psi}\right)'\!z
\right](r) \le \left | \left(\frac{\psi'}{\psi}\right)'\!z \right
|(r) \le \overline B z(R_0)
$$
(with $\overline B$ defined in \eqref{e10f}), since $z=z_1$ is
increasing by Lemma \ref{lemma2f}-$(ii)$; \item in $[R_0,R_1)$ we
have
$$-\left[\left(\frac{\psi'}{\psi}\right)'\!z
\right](r) \le - \underline B z(r) \le
 - \underline B z(R_0) \,,
$$
where
$$
\underline B :=\min_{[R_0,
R_1]}\left(\frac{\psi'}{\psi}\right)'\,.
$$
Note that $\underline B>0$ by \eqref{e8f}; moreover, Lemma \ref{lemma2f}-$(ii)$ has been used again.
\end{itemize}
By the above remarks, we have in $(0,R_0)$
\begin{equation}\label{e30f}
 -\left[\left(\frac{\psi'}{\psi}\right)'\!z
\right](r)+m_1z(R_0)(R_1-r)^{3l-2}\left[3l(3l-1)+3l\left(\frac{\psi'}{\psi}\right)(r)(r-R_1)-B(r-R_1)^2\right] \le \\
\end{equation}
\[ \le z(R_0) \left\{  \overline B  +m_1(R_1- R_0)^{3l-2}\left[3l(CR_1+3l-1)-B(R_0-R_1)^2\right] \right \} \,,
 \]
if $$B\geq \frac{3l(3l-1+C R_1)}{(R_1- R_0)^2},$$ where
$$
C :=\max_{[0,a]}\left|\frac{\psi'}{\psi}\right|\,.
$$
Similarly, in $[R_0, R_1)$ we have:
\begin{equation}\label{e31f}
 -\left[\left(\frac{\psi'}{\psi}\right)'\!z
\right](r)+m_1z(R_0)(R_1-r)^{3l-2}\left[3l(3l-1)+3l\left(\frac{\psi'}{\psi}\right)(r)(r-R_1)-B(r-R_1)^2\right]
\le
\end{equation}
\[
 \le  z(R_0) \left [ - \underline B  +3lm_1R_1^{3l-2}\left (3l-1+CR_1\right) \right ]\,.
 \]
 It is easily seen that the right-hand sides of inequalities \eqref{e30f}, \eqref{e31f} are both negative if we further require that
$$
B \ge \frac{\overline B + 3l m_1 (R_1-R_0)^{3l-2}(CR_1+3l-1)}{m_1 (R_1-R_0)^{3l}}\, \; \hbox{and }\; 0< m_1 < \frac{\underline B}{3l R_1^{3l-2}(C R_1
+ 3l-1)} \,.
$$
Then from  \eqref{e29f} we obtain that
\begin{equation}\label{e32f}
\frac{(\psi w')'}{\psi} \,+\, f'(Z)w\,<\, 0\quad\textrm{in}\;\;
(0, R_1]\,.
\end{equation}
It is similarly seen that, for $m_2>0$ small enough and $B>0$ sufficiently large,
\begin{equation}\label{e33f}
\frac{(\psi w')'}{\psi} \,+\, f'(Z)w \,<\, 0\quad\textrm{in}\;\;
(R_2,a)\,.
\end{equation}

\smallskip

Now consider the interval $[R_1,R_2]$. Since $Z$ is a stationary
solution of problem \eqref{e1f},  in $[R_1,R_2]$ there holds
$$
Z''+ \frac{\psi'}{\psi}Z' +f(Z)=0.
$$
Deriving the above equality and recalling that $Z'=z$ we obtain
$$
z''+ \frac{\psi'}{\psi}z' +f'(Z)z=-\left (\frac{\psi'}{\psi}
\right)' z \quad \textrm{in}\;\; [R_1, R_2] \,.
$$
The right-hand side of the above equality is negative in $[R_1,
R_2]$ by inequality \eqref{e8f}, thus
\begin{equation}\label{e34f}
\frac{(\psi w')'}{\psi} \,+\, f'(Z)w=\frac{(\psi z')'}{\psi} \,+\, f'(Z)z<0 \quad \textrm{in}\;\; [R_1, R_2] \,.
\end{equation}

\smallskip

From \eqref{e33f}-\eqref{e34f} we conclude that the first inequality of \eqref{e28f} is satisfied. It remains to prove the inequalities
$-w'(0)+\alpha w(0)>0$, $w'(a)+\alpha w(a)>0$. For this purpose, note that in view of \eqref{e103f} we can infer that there exist two constants
$C_0>0, B^*>\bar B$ such that
\[
\int_0^a z(r; B)\,dr \geq C_0 B \quad \textrm{for any} \;\, B> B^*\,.
\]
Thus,
\begin{equation}\label{e104f}
\alpha = -\frac{\beta}{\int_0^a z(r) dr} > -\frac{\beta}{ C_0 B}\quad \textrm{for any} \;\, B> B^*\,.
\end{equation}
Hence, in view of \eqref{e104f}, $(iv)$ and $(iv')$, choosing $B>B^*$ large enough and $l> \frac{\beta}{3 C_0 \bar B}\max\{R_1, a-R_2\}$, we obtain
$$
-w'(0)+\alpha w(0)=-1+ m_1z(R_0) R_1^{3l-1}(3l+\alpha R_1)>0 \,, \quad 
$$
and
$$
w'(a)+\alpha w(a)=-1+\alpha\beta+m_2z(R_3) (a-R_2)^{3l-1}[3l+(a-R_2)\alpha]>0\,. 
$$

This completes the proof of the Claim. Observe that \eqref{e27f} is satisfied. Then by Lemmas \ref{lemma3f} and \ref{lemma1f} the function $Z$ is a
stable stationary solution of problem \eqref{e1f} with $f$ given by \eqref{e16f}.
 Then the conclusion follows.
\hfill$\square$

\begin{rem}\label{ossdiff}
Note that the construction of $z$ and $f$ are similar to that in \cite{BPT}. However, in \cite{BPT} we had $\beta=0$; instead now we need $\beta>0$.
Moreover, in the proof of the result in \cite{BPT} analogous to Theorem \ref{thm2f}, we had $l=1$ in \eqref{e1fa}.
\end{rem}

\section{Further examples}\label{FE}
\subsection{Spherically symmetric manifolds}
We start recalling some basic notions on spherically symmetric manifolds. Let $M$ be a complete Riemannian manifold.  Let us fix a point $o\in M$ and
denote by $\textrm{Cut}(o)$ the {\it cut locus} of $o$. For any $x\in M\setminus \big[\textrm{Cut}(o)\cup \{o\} \big]$, one can define the {\it polar
coordinates} with respect to $o$, see e.g. \cite{G}. Namely, for any point $x\in M\setminus \big[\textrm{Cut}(o)\cup \{o\} \big]$ there correspond a
polar radius $r(x) := dist(x, o)$ and a polar angle $\theta\in \mathbb S^{m-1}$ such that the shortest geodesic from $o$ to $x$ starts at $o$ with
the direction $\theta$ in the tangent space $T_oM$. Since we can identify $T_o M$ with $\mathbb R^m$, $\theta$ can be regarded as a point of $\mathbb
S^{m-1}.$

The Riemannian metric in $M\setminus\big[\textrm{Cut}(o)\cup \{o\} \big]$ in polar coordinates reads
\[ds^2 = dr^2+A_{ij}(r, \theta)d\theta^i d\theta^j, \]
where $(\theta^1, \ldots, \theta^{m-1})$ are coordinates in
$\mathbb S^{m-1}$ and $(A_{ij})$ is a positive definite matrix. It
is not difficult to see that the Laplace-Beltrami operator in
polar coordinates has the form
\begin{equation}\label{e70}
\Delta = \frac{\partial^2}{\partial r^2} + \mathcal F(r,
\theta)\frac{\partial}{\partial r}+\Delta_{S_{r}},
\end{equation}
where $\mathcal F(r, \theta):=\frac{\partial}{\partial
r}\big(\log\sqrt{A(r,\theta)}\big)$, $A(r,\theta):=\det
(A_{ij}(r,\theta))$, $\Delta_{S_r}$ is the Laplace-Beltrami
operator on the submanifold $S_{r}:=\partial B(o, r)\setminus
\textrm{Cut}(o)$\,.

$M$ is a {\it manifold with a pole}, if it has a point $o\in M$
with $\textrm{Cut}(o)=\emptyset$. The point $o$ is called {\it
pole} and the polar coordinates $(r,\theta)$ are defined in
$M\setminus\{o\}$.

A manifold with a pole is a {\it spherically symmetric manifold}
or a {\it model}, if the Riemannian metric is given by
\begin{equation}\label{e70b}
ds^2 = dr^2+\phi^2(r)d\theta^2,
\end{equation}
where $d\theta^2$ is the standard metric in $\mathbb S^{m-1}$, and
\begin{equation}\label{26}
\phi\in \mathcal A:=\Big\{f\in C^\infty((0,\infty))\cap
C^1([0,\infty)): f'(0)=1,\, f(0)=0,\, f>0\text{ in }
(0,\infty)\Big\}.
\end{equation}
In this case, we write $M\equiv M_\phi$; furthermore, we have
$\sqrt{A(r,\theta)}=\phi^{m-1}(r)$, so the boundary area of the
geodesic sphere $\partial S_R$ is computed by
\[S(R)=\omega_m\phi^{m-1}(R),\]
$\omega_m$ being the area of the unit sphere in $\mathbb R^m$.
Also, the volume of the ball $B_R(o)$ is given by
\[\operatorname{Vol}(B_R(o))=\int_0^R S(\xi)d\xi\,. \]
Moreover we have
\[\Delta = \frac{\partial^2}{\partial r^2}+ (m-1)\frac{\phi'}{\phi}\frac{\partial}{\partial r}+ \frac1{\phi^2}\Delta_{\mathbb S^{m-1}},\]
or equivalently \begin{equation}\label{e71}\Delta =
\frac{\partial^2}{\partial r^2}+
\frac{S'}{S}\frac{\partial}{\partial r}+
\frac1{\phi^2}\Delta_{\mathbb S^{m-1}},\end{equation} where
$\Delta_{\mathbb S^{m-1}}$ is the Laplace-Beltrami operator in
$\mathbb S^{m-1}$. Note that similarly to \eqref{e101f} and
\eqref{e102f} one can compute the mean curvature of $\partial
B_\rho(o)\,$ in the radial direction $\frac{\partial}{\partial r}$
as follows
\begin{equation}\label{e75}
H(r):=-\frac{\phi'(r)}{\phi(r)} \quad \textrm{for each}\;\, r>0\,.
\end{equation}

Observe that for $\phi(r)=r$, $M=\mathbb R^m$, for $\phi(r)=\sinh
r$, $M$ is the $m-$dimensional hyperbolic space $\mathbb H^m$,
while for $\phi(r)=\sin r\,\,(r\in [0, \pi))$ we have the
$m-$dimensional sphere $\mathbb S^m\subset \mathbb R^{m+1}$ (see
\cite{G})\,.

For any $x\in M\setminus\big[\textrm{Cut}(o)\cup\{o\} \big]$, denote by $\operatorname{Ric}_o(x)$ the {\it Ricci curvature} at $x$ in the direction
$\frac{\partial}{\partial r}$. If $M\equiv M_\psi$ is a model manifold, then for any $x=(r, \theta)\in M\setminus\{o\}$
\begin{equation}\label{e74}
\operatorname{Ric}_{o}(x)=-(m-1)\frac{\phi''(r)}{\phi(r)}.
\end{equation}

Now we discuss the stability of {\em radial} solutions of problem \eqref{e1f} with $\Omega:=B_R(o)\setminus B_r(o)\subset M_\phi$ for each $0<r<R$\,.

In view of \eqref{e71} and \eqref{e8fa}, setting $S\equiv \psi$, the same results as in Section \ref{sectionS} hold. Indeed, we have the following
theorem.

\begin{theorem} Let $\Omega:=B_R(o)\setminus B_\rho(o)\subset M_{\phi}$ with $0<\rho<R$.
\begin{itemize}
\item[$(i)$] Suppose that $v$ is a radial stationary solution of
\eqref{e1f}. If
\begin{equation}\label{e72}
-\left (\frac{\phi'}{\phi} \right)' = -\frac{\phi''}{\phi} + \left
(\frac{\phi'}{\phi} \right )^2\geq0\, \quad \textrm{in}\;\;
(\rho,R)\,,
\end{equation}
and
\begin{equation}\label{e73}
\begin{split}
&\phi(R)\{v'(R) v''(R)+\alpha[v'(R)]^2\} -
\phi(\rho)\{v'(\rho)v''(\rho) -\alpha
[v'(\rho)]^2\}\\&\hspace{2cm}=\phi(R)\big[\alpha^2(m-1)H(R) v(R)^2
+\alpha v(R) f(v(R))+\alpha^3 v(R)^2
\big]\\&\hspace{4cm}+\phi(\rho)\big[\alpha^2(m-1)H(\rho) v(\rho)^2
+\alpha v(\rho) f(v(\rho))+\alpha^3 v(\rho)^2 \big]<0\,,
\end{split}
\end{equation}
then $v$ is unstable.

\item[$(ii)$] If for some $\hat R \in (\rho, R)$
\begin{equation}\label{e76}
\left (\frac{\phi'}{\phi} \right)' (\hat R) > 0\,,
\end{equation}
then there exists $f \in C^1(\mathbb R), \alpha<0$ such that problem \eqref{e1f} admits a stationary asymptotically stable solution which satisfies
\eqref{e73}.
\end{itemize}
\end{theorem}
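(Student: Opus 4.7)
\medskip

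\noindent\textbf{Proof proposal.} The strategy is to reduce the entire statement to the one-dimensional computations already carried out in Section~\ref{sectionS}, by restricting to radial functions and identifying the right weight. For a radial function $u(r)$ on $M_\phi$, formula \eqref{e71} gives $\Delta u = u'' + (S'/S)\,u'$ where $S(r):=\phi^{m-1}(r)$, while the volume and surface elements factor as $dV = \omega_m S(r)\,dr\,d\theta$ and $dS\big|_{\partial B_r}=\omega_m S(r)\,d\theta$. Hence, on the subspace of radial test functions, the Rayleigh quotient \eqref{Dario5} and every integration-by-parts identity used in Section~\ref{sectionS} coincide with their 2D analogues after the formal substitution $\psi\leftrightarrow S$. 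Note also that, by \eqref{e75}, $(S'/S)(r)=(m-1)\phi'(r)/\phi(r)=-(m-1)H(r)$, so the curvature condition \eqref{e72} is equivalent to $-(S'/S)'\ge 0$.

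For part $(i)$, I would differentiate the radial equation $v'' + (m-1)(\phi'/\phi)v' + f(v)=0$ with respect to $r$, multiply by $v'$, and integrate against the weight $S$ on $(\rho,R)$; applying the divergence theorem on the annulus (so that the outward normal to $\Omega$ is $+\partial_r$ on $\partial B_R$ and $-\partial_r$ on $\partial B_\rho$, producing the sign pattern in \eqref{e25f}), and using the fact that $v'$ is an admissible radial test function in \eqref{Dario5}, one obtains
\begin{equation*}
\lambda_1\int_\rho^R (v')^2 S\,dr \le \int_\rho^R \left(\tfrac{S'}{S}\right)'(v')^2 S\,dr + S(R)\{v'(R)v''(R)+\alpha(v'(R))^2\} - S(\rho)\{v'(\rho)v''(\rho)-\alpha(v'(\rho))^2\}.
\end{equation*}
The curvature hypothesis \eqref{e72} makes the volume integral non-positive, while the Robin condition $\partial_\nu v=-\alpha v$ (giving $v'(R)=-\alpha v(R)$, $v'(\rho)=\alpha v(\rho)$) combined with the equation $v''(r)=-(S'/S)v'(r)-f(v(r))$ converts the boundary brackets exactly into the quantities appearing in \eqref{e73} (after using $H(r)=-\phi'(r)/\phi(r)$ to rewrite $S'/S$). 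Hypothesis \eqref{e73} then forces $\lambda_1<0$, establishing instability; this is the verbatim analogue of the passage from \eqref{e25f}--\eqref{e27fa} to Theorem~\ref{thm1f}.

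For part $(ii)$, I would replay the construction in the proof of Theorem~\ref{thm2f}, reading every occurrence of $\psi$ as $S=\phi^{m-1}$. Pick $R_0<R_1<R_2<R_3$ in a neighborhood of $\hat R$ where \eqref{e76} holds, so that $(S'/S)'>0$ on $[R_0,R_3]$ (equivalent to $(\phi'/\phi)'>0$). Define $z_1$ and $z_2$ as solutions of the Cauchy problems \eqref{e9f} and \eqref{e11f} with $\psi$ replaced by $S$ on $[\rho,R_1)$ and $(R_2,R]$ respectively; the monotonicity and blow-up statements of Lemma~\ref{lemma2f} are purely ODE facts about the weight and transfer verbatim. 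Patch $z_1,z_2$ with a smooth positive interpolant on $[R_1,R_2]$, set $Z(r):=\int_\rho^r z(s)\,ds$, fix $\alpha=-\beta/\int_\rho^R z\,dr<0$ so the Robin condition holds at both boundary components, and define $f$ piecewise by \eqref{e16f} (with $S$ in place of $\psi$) so that $Z$ solves \eqref{Dario0}. Finally, to prove stability, apply the Barta-type criterion of Lemma~\ref{lemma1f} with the comparison function $w$ of \eqref{e1fa}: exactly as in \eqref{e28f}--\eqref{e34f}, one chooses $m_1,m_2$ small, $l$ odd and large, and $B$ large to ensure both the interior inequality $\Delta w + f'(Z)w<0$ and the boundary inequalities $\partial_\nu w+\alpha w\ge 0$; the new factor $m-1$ absorbed into $S'/S$ does not affect the structure of these estimates.

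The main obstacle is routine but not cosmetic: one must check that the exponent $m-1$ does not spoil the sign estimates at the boundary, i.e.\ that the lower bound \eqref{e104f} on $\int_\rho^R z(r;B)\,dr$ in terms of $B$ survives when $\psi$ is replaced by $\phi^{m-1}$ (this follows from the integrated ODE as in \eqref{e103f}, since only positivity of the weight is used), and that the choice of $l$ large enough dominates the $\alpha$-contributions at both $r=\rho$ and $r=R$. Everything else is a direct dictionary translation from $\psi$ to $S=\phi^{m-1}$.
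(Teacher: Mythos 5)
Your proposal is correct and follows exactly the paper's own route: the paper proves this theorem simply by observing that, on radial functions, \eqref{e71} reduces the Laplace--Beltrami operator to the surface-of-revolution form with $\psi$ replaced by $S=\phi^{m-1}$, so that Theorems \ref{thm1f} and \ref{thm2f} (and the intermediate estimates \eqref{e25f}--\eqref{e27fa} and the construction of Lemma \ref{lemma3f}) apply verbatim --- which is precisely the dictionary you spell out. Your more careful bookkeeping even flags that the boundary prefactors should then read $\phi^{m-1}(R)$ and $\phi^{m-1}(\rho)$ rather than the $\phi(R)$, $\phi(\rho)$ printed in \eqref{e73}; this is immaterial for $m=2$ and otherwise only a harmless normalization slip in the statement, not in your argument.
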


Note that, in view of \eqref{e75} and \eqref{e74}, the
inequalities \eqref{e72} and \eqref{e76} have a geometrical
meaning. Indeed, \eqref{e72} is equivalent to the following
requirement
\[\operatorname{Ric}_o(x)\geq - (m-1)[H(r)]^2 \quad \textrm{for any}\,\, x\equiv (r,\theta)\in \Omega\,,\]
and similarly for \eqref{e76}\,.

\subsection{Straight cylinder in $\mathbb R^3$}
A {\em straight cylinder} $\mathcal C$ in $\mathbb{R}^3$ is
parameterized as follows
\begin{equation}\label{ex1}
\left\{
\begin{array}{ll}
 \, x = \psi(t)
\\& \\
\textrm{ }y\,= \chi(t)  \\&\\
 \textrm{ }z\, = s,
\end{array}
\right.\qquad \big((t,s)\in [t_1,t_2]\times[s_1, s_2]\big)
\end{equation}
where $t\mapsto (\psi(t), \chi(t), 0)$ is a simple, regular,
closed plane curve
 $(t \in [t_1, t_2]; \, t_1<t_2)$. We suppose that
 $[\psi'(t)]^2+[\chi'(t)]^2=1$ for all $t\in [t_1, t_2]\,.$ It is easily seen that, for all $p\in \mathcal C,\, X\in
T_p\mathcal C$,
\[\operatorname{Ric}(X,X)\,=\,0\,;\]
furthermore, since the second fundamental form of $\partial
\mathcal C$ with respect to the embedding $\partial \mathcal
C\hookrightarrow \mathcal C$ is identically zero, we also have
that its mean curvature identically vanishes.

Note that
\[\Delta u(t,s) \,=\, u_{tt}(t,s) + u_{ss}(t,s)\,.\]
Then, by a similar argument to that of Proposition \ref{prop1f},
one can see that any stable solution of problem \eqref{e1f} must
depend only on the variable $s$.

Now, consider a solution $u=u(s)$ of problem \eqref{e1f}. Thus,
using the same notation as in Section \ref{SecDario}, we have
$\tilde{\nabla} u=\frac{\partial}{\partial t} u=0$. Hence, from the
same arguments used in the proofs of Proposition \ref{thm_Dario1}
and of Theorem \ref{thm_Dario0}, we can infer that if
\begin{itemize}
\item[i)] $\displaystyle\int_{\partial\mathcal C}\big[\alpha^3 u^2 +\alpha u f(u)]\, d\sigma
<0\,,$ or
\item[ii)] $\alpha>0$ and $\displaystyle\int_{\partial\mathcal C}\big[\alpha^3 u^2 +\alpha u f(u)]\, d\sigma
\leq0\,,$ or
\item[iii)] $\alpha<0$, $\displaystyle\int_{\partial\mathcal C}\big[\alpha^3 u^2 +\alpha u f(u)]\, d\sigma \leq0\,$ and $u$ does not change sign,
\end{itemize}
then $u$ is not stable (see also Remark \ref{ossDF}).

\medskip
{\bf Acknowledgement} This work was initiated during a visit of the first author at the University of Milan. She would like to express her gratitude
for the hospitality and stimulating atmosphere.

\bibliographystyle{plain}

\end{document}